\def\calN{{\mathcal N}}
\def\calD{{\mathcal D}}
\def\boldf{{\mathbf f}}
\newtheorem{thm}{Theorem}[section]
\newcommand{\blockmatrix}[9]{
  \draw[draw=#4,fill=#5] (0,0) rectangle( #1,#2);
  \ifthenelse{\equal{#6}{true}}
  {
    \draw[draw=#7,fill=#8] (0,#2) -- (#9,#2) -- ( #1,#9) -- ( #1,0) -- ( #1 - #9,0) -- (0,#2 -#9) -- cycle;
  }
  {}
  \draw ( #1/2, #2/2) node { #3};
}
\newcommand{\mblockmatrix}[4][none]{
  \begin{tikzpicture}
  \ifthenelse{\equal{#1}{none}}
  {
    \blockmatrix{#2}{#3}{#4}{none}{none}{false}{none}{none}{0.0}
  }
  {
    \definecolor{fillcolor}{rgb}{#1}
    \blockmatrix{#2}{#3}{#4}{none}{fillcolor}{false}{none}{none}{0.0}
  }
  \end{tikzpicture}
}
\newcommand{\fblockmatrix}[4][none]{
  \begin{tikzpicture}
  \ifthenelse{\equal{#1}{none}}
  {
    \blockmatrix{#2}{#3}{#4}{black}{none}{false}{none}{none}{0.0}
  }
  {
    \definecolor{fillcolor}{rgb}{#1}
    \blockmatrix{#2}{#3}{#4}{black}{fillcolor}{false}{none}{none}{0.0}
  }
  \end{tikzpicture}
}
\newcommand{\dblockmatrix}[4][none]{
  \begin{tikzpicture}
  \ifthenelse{\equal{#1}{none}}
  {
    \blockmatrix{#2}{#3}{#4}{black}{none}{true}{black}{none}{0.35cm}
  }
  {
    \definecolor{fillcolor}{rgb}{#1}
    \blockmatrix{#2}{#3}{#4}{black}{none}{true}{black}{fillcolor}{0.35cm}
  }
  \end{tikzpicture}
}
\newcommand{\Imatrix}[4][none]{
  \begin{tikzpicture}
  \ifthenelse{\equal{#1}{none}}
  {
    \blockmatrix{#2}{#3}{#4}{black}{none}{true}{black}{none}{0.1cm}
  }
  {
    \definecolor{fillcolor}{rgb}{#1}
    \blockmatrix{#2}{#3}{#4}{black}{none}{true}{black}{fillcolor}{0.1cm}
  }
  \end{tikzpicture}
}
\newcommand{\diagonalblockmatrix}[5][none]{
  \begin{tikzpicture} 

  \ifthenelse{\equal{#1}{none}}
  {
    \blockmatrix{#2}{#3}{#4}{black}{none}{true}{black}{none}{#5}
  }
  {
    \definecolor{fillcolor}{rgb}{#1}
    \blockmatrix{#2}{#3}{#4}{black}{none}{true}{black}{fillcolor}{#5}
  }

  \end{tikzpicture}
}
\newcommand{\valignbox}[1]{
  \vtop{\null\hbox{#1}}
}
\newenvironment{blockmatrixtabular}
{
  \begin{tabular}{
  @{}l@{}l@{}l@{}l@{}l@{}l@{}l@{}l@{}l@{}l@{}l@{}l@{}l@{}l@{}l@{}l@{}l@{}l@{}l
  @{}l@{}l@{}l@{}l@{}l@{}l@{}l@{}l@{}l@{}l@{}l@{}l@{}l@{}l@{}l@{}l@{}l@{}l@{}l
  @{}l@{}l@{}l@{}l@{}l@{}l@{}l@{}l@{}l@{}l@{}l@{}l@{}l@{}l@{}l@{}l@{}l@{}l@{}l
  @{}
  }
}
{
  \end{tabular}
}
\newcommand{\R}{\mathbbm{R}}
\newcommand{\bl}{\pmb{\lambda}}
\newcommand{\x}{\underline{x}}
\newcommand{\y}{\underline{y}}
\newcommand{\ppx}[1]{\frac{\partial}{\partial #1}}
\newcommand{\rrbm}{R$^2$BFM}
\DeclareMathOperator*{\argmax}{argmax}
\DeclareMathOperator*{\argmin}{arg\,min}
\theoremstyle{remark}
\newtheorem*{remark}{Remark}
\begin{document}
\title[ ]{A Reduced Radial Basis Function Method for Partial Differential Equations 
on irregular domains}
\author{Yanlai Chen$^{1,2}$, 
Sigal Gottlieb$^{1,3}$, Alfa Heryudono$^{1,3,4}$, Akil Narayan$^{1,4}$}

\address{1 - Department of Mathematics, University of Massachusetts Dartmouth, 285 Old Westport Road, North Dartmouth, MA 02747, USA.}
\address{2 - The research of this author was partially supported by National Science Foundation grant DMS-1216928.}
\address{3 - The research of this author was partially supported by AFOSR grant FA9550-09-1-0208.}
\address{4 - The research of this author was partially supported by National Science Foundation grant DMS-1318427.}

\begin{abstract}
We propose and test the first Reduced Radial Basis Function Method (R$^2$BFM) for solving parametric partial differential equations on irregular domains. The two major ingredients are a stable Radial Basis Function (RBF) solver that has an optimized set of centers chosen through a reduced-basis-type greedy algorithm, and a collocation-based model reduction approach that systematically generates a reduced-order approximation whose dimension is orders of magnitude smaller than the total number of RBF centers. The resulting algorithm is efficient and accurate as demonstrated through two- and three-dimensional test problems.
\end{abstract}
\maketitle

\section{Introduction}

Parameterized systems are common in science and engineering, and a common situation in multi-query contexts is the need to accurately solve these systems for a variety of different parameter values. This requires a large number of repeated and expensive simulations, frequently rendering the total computational cost prohibitive. To overcome this obstacle while maintaining accurate numerical solutions in real time, the {\em reduced basis method} (RBM) was developed \cite{Barrett_Reddien,Noor_Peters,Peterson,Prudhomme_Rovas_Veroy_Maday_Patera_Turinici}. RBMs split the solution procedure to two parts: an offline part where a small number of judiciously-chosen solutions are precomputed via a greedy algorithm, and an online part in which the solution for any new parameter value is efficiently approximated by a projection onto the low-dimensional space spanned by the precomputed solutions.  This offline-online decomposition strategy is effective when one can afford an initial (offline) investment of significant computational resources so that certifiably accurate solutions for any other parameter value can be obtained in real-time (online).  

The classical RBM was originally developed for use with variational methods for approximating solutions to partial differential equations (PDEs) with affine dependence on the parameter.  The most popular of these variational approaches is the {\em Galerkin} method, derived by positing a solution ansatz in a subspace, and subsequently requiring that the projection of the PDE residual onto the same functional subspace is zero. An alternative approach for the solution of PDEs is to require that the PDE residual vanish at some predetermined collocation points.  These {\em collocation} methods are attractive because they are frequently easier to implement compared to Galerkin methods, particularly for time-dependent nonlinear problems \cite{TrefethenSpecBook,ShenTangBook,HesthavenGottlieb2007}.  In \cite{ChenGottlieb}, two of the authors developed a RBM suitable for collocation methods, and thus introduced a reduced collocation method (RCM). The RCM is extremely efficient and provides a reduced basis strategy for practitioners who prefer a collocation approach to solving PDEs, rather than a Galerkin approach.  Indeed, one of the two approaches in \cite{ChenGottlieb}, the empirical reduced collocation method, eliminates a potentially costly online procedure that is usually necessary for non-affine problems using a Galerkin approach.  The RCM's efficiency matches (or, for non-affine problems, exceeds) that of traditional Galerkin-based RBM approaches.

The RCM was developed for use with traditional collocation methods, particularly for pseudospectral methods \cite{HesthavenGottlieb2007}.  However, such collocation methods require a very structured grid which may be inconvenient when the spatial domain associated with the PDE has an irregular shape. When an irregular geometry is present, meshfree methods are a viable choice when compared to traditional meshed methods. Meshfree methods eliminate the need for finite-element-like meshes or adherence to symmetrical grid layouts, and have implementation costs that scale well with the spatial dimension; these properties are advantageous when compared to more standard mesh-based discretization approaches.

One particular mesh-free collocation method that we explore here is based on radial basis functions (RBFs). RBF methods have been widely used for scattered data interpolation and approximation in high dimensions \cite{Buhmann03,wendland,Fasshauer07}. RBF collocation methods for elliptic PDEs, based on global, non-polynomial interpolants, have been developed since the early 90s \cite{Kansa90,Kansa90b,KansaHon02,LarssonFornberg03}. RBF methods are collocation methods, implemented on scattered sets of collocation sites (commonly called \textit{centers}) and, unlike traditional pseudospectral methods, are not tied to a particular geometric structure. We employ a spatially-local variant of more traditional global RBF methods. RBF methods usually approximate differential operators with global stencils, but we employ a local approach which, inspired by its relation to finite-difference (FD) methods, are called RBF-FD methods \cite{tolstykh2003,shudingyeo2005,Wright2006,fornberglehto2011}. The RBF-FD method has the advantage of retaining high-order accuracy while improving computational efficiency by forming sparse operator matrices.

In this paper we extend the RBM strategy to include meshfree collocation methods, in particular RBF and RBF-FD methods.  We develop an algorithm that inherits the strengths of model-order reduction and geometric flexibility from RBM/RCM and RBF methods, respectively. This novel Reduced Radial Basis Function Method (\rrbm{}) is capable of achieving orders-of-magnitude speedup for solving parameterized problems on irregular domains. Our numerical results demonstrate exponential convergence with respect to the number of precomputed solutions. 

The paper is organized as follows: In Section \ref{sec:rbm} we review both the RBM and RCM order-reduction algorithms. In Section \ref{sec:rbf} we present the particular radial basis function method we adopt (RBF-FD). The novel \rrbm{} algorithm is proposed in Section \ref{sec:r2bfm}, and we present numerical experiments that illustrate its performance for 2D- and 3D-problems. 

\section{The Least Squares Reduced Collocation Method}
\label{sec:rbm}

In this section, we briefly review the reduced basis and least-squares reduced collocation methods originally introduced in \cite{ChenGottlieb}. RBMs aim to efficiently solve parameterized PDEs with certifiable error bounds. Repeatedly solving the full PDE for several values of the parameter is computationally onerous when the PDE itself is so complicated that a single solve is expensive. The RBM framework mitigates this cost first by carefully choosing a small set of parameter values at which the expensive PDE model is solved and stored. Once this expensive ``offline" procedure is completed, then the ``online" RBM algorithm computes the PDE solution at any new parameter value as a linear combination of the precomputed and stored solutions. The offline-online decomposition details of the algorithm ensure (1) that this procedure is accurate and (2) that the cost of the new parameter value solve is orders-of-magnitude smaller than a standard PDE solve.

The reduced basis method was invented in the late 1970s for nonlinear structural analysis \cite{Almroth_Stern_Brogan,Noor_Peters,Nagy}, and more broadly developed therein  \cite{Barrett_Reddien,Fink_Rheinboldt_1,Porsching,MatacheBabuskaSchwab2000,Peterson,Balmes}.  Recently, it has been systematically analyzed and applied to a wide variety of problems, see e.g.  \cite{Prudhomme_Rovas_Veroy_Maday_Patera_Turinici, Veroy_Prudhomme_Patera,CHMR_Sisc, Grepl_Patera, Haasdonk_Ohlberger, Nguyen_Rozza_Huynh_Patera, UrbanPatera2014, YanoPatera2013, YanoPateraUrban2014}, with \cite{Rozza_Huynh_Patera} containing extensive references. The RBM algorithm is traditionally applied to Galerkin discretizations of PDEs. However, recent work in \cite{ChenGottlieb, ChenGottliebMaday} develops a robust framework for applying the RBM algorithm to collocation discretizations of PDEs.  Since radial basis function discretization methods are collocative, we will later in the paper apply the RCM developed in \cite{ChenGottlieb}, in particular the Least Squares RCM.

To describe the RCM algorithm, we begin with a linear parametrized PDE of the form
\begin{align}\label{eq:pde}
  \mathbb{L} (\mu)\, u_\mu (\x) &= f(\x; \mu), & \x &\in \Omega \,\,\,{\subset \mathbb{R}^d}
\end{align}
with appropriate boundary conditions. Here, $\x$ is the spatial variable, $\mu$ is the parameter, $\mathbb{L}$ is the differential operator that depends on the parameter, $f$ is a forcing function, and $u$ is the unknown solution. The dimension of the spatial variable obeys $d \leq 3$ for most physical problems of interest, and so we will adopt the notation $\x = (x, y, z)^T$ for the components of $\x$. For any parameter $\mu= (\mu^1, \dots, \mu^p) \in \calD \subset \R^p$, a prescribed {$p$-}dimensional real parameter domain, we introduce a {\em discrete} differentiation operator $\mathbb{L}_\calN (\mu)$ approximating $\mathbb{L} (\mu)$ such that 
\begin{align}
\mathbb{L}_\calN (\mu)\, u^\calN_\mu (\x_j) = f(\x_j; \mu),  \label{eq:discpde}
\end{align}
is satisfied {\em exactly} on  a given set of $\calN$ collocation points $C^\calN = \{\x_j\}_{j=1}^\calN$. We assume that this discretization is such that the resulting approximate solution $u^\calN_\mu$ is highly accurate for \textit{all} parameter values $\mu \in \calD$, and refer to $u^\calN_\mu$  as the ``truth approximation''. With such a robust requirement on accuracy of the truth solution, $\calN$ will be sufficiently large so that solving \eqref{eq:discpde} is a relatively expensive operation that we wish to avoid performing too many times.

With this setup, the RCM algorithm in \cite{ChenGottlieb} proceeds first with an \textit{offline} stage where the algorithm is shown with a small number of expensive truth solves of \eqref{eq:discpde} along with some preprocessing of reduced operators, followed by an \textit{online} stage where computational savings are reaped whenever the algorithm is queried for the solution at a new parameter value. 

\subsection{The offline stage: choosing parameter values}\label{sec:rbm-offline-snapshots}
The first main goal of the offline stage is to choose $N$ parameter values $\mu_1, \ldots, \mu_N$ with $N \ll \calN$ such that the corresponding truth solution ensemble $u_{\mu_1}^\calN, u_{\mu_2}^\calN, \ldots, u_{\mu_N}^\calN$ has a span that accurately approximates $u^\calN_\mu$ for any $\mu \in \calD$. The truth solutions $u_{\mu_j}^\calN$ are frequently called ``snapshots". That it is even possible to generate such a collection of snapshots has been theoretically verified for several differential operators of interest \cite{Maday_Patera_Turinici_2,BuffaMadayPateraPrudhommeTurinici2011,BinevCohenDahmenDevorePetrovaWojtaszczyk}.  The algorithmic way in which this reduced set of parameters $\mu$ is chosen is via a greedy computation that successively chooses parameter values maximizing an error estimate.  The definition of this error estimate hinges on the formulation of a reduced approximation: For any $1 \leq n \leq N$, we seek the reduced solution $u^{(n)}_\mu$ defined as
\begin{align}\label{eq:rbm-surrogate}
  u^{(n)}_\mu(\cdot) = \sum_{j=1}^n c_j(\mu) u^\calN_{\mu_j}(\cdot), 
\end{align}
where the coefficients $\mathbf{c} = \left\{c_j(\mu)\right\}_{j=1}^n$ are computed by solving a least-squares residual problem on the truth solution nodes $\mathbf{\x} = C^\calN$:
\begin{subequations}\label{eq:rbm-approximation}
\begin{align}
  \label{eq:rbm-coefficients}
  \mathbf{c}(\mu) = \argmin_{\mathbf{d} \in \R^n} R_n(\mu; \mathbf{d}), \quad \mbox{with}\\
  \label{eq:rbm-offline-residual}
  R_n(\mu; \mathbf{d}) = \left\| f(\mathbf{\x}; \mu) - \sum_{j=1}^n d_j(\mu) \mathbb{L}_{\calN}(\mu) u^\calN_{\mu_j}(\mathbf{\x}) \right\|_{\ell^2}.
\end{align}
\end{subequations}
The ultimate goal of the error estimate is to approximate the error $u_\mu^{(n)} - u^\calN_\mu$ without directly forming the truth solution $u^\calN_\mu$. We denote this error estimate by $\Delta_n(\mu)$ and define it as:
\begin{align}\label{eq:deltan-definition}
  \Delta_n\left(\mu; \mathbf{c}(\mu)\right) =  \frac{R_n\left(\mu; \mathbf{c}(\mu)\right)}{{\sqrt{\beta_{LB}(\mu)}}}.
\end{align}
Above, $\beta_{LB}(\mu)$ is a lower bound for the smallest eigenvalue of $\mathbb{L}_\calN (\mu)^T \mathbb{L}_\calN (\mu)$, which effectively translates residuals into a bound for the actual errors. The accurate, computationally $\calN$-independent computation of $\beta_{LB}$ (and thus of $\Delta_n$) is in general one of the major difficulties in RBM algorithms, see e.g. \cite{HuynhSCM, CHMR-M2an, HKCHP}. These ingredients and an $\calN-$independent evaluation of $R_n(\mu; \mathbf{c})$ allow us finally to define how the parameter values are chosen through a greedy approach, shown in Algorithm \ref{alg:sketch}. If $\Delta_n$ can be computed in a $\calN$-independent fashion, then this greedy approach is computationally efficient.
\begin{algorithm}[h!]
  \caption{Outline of the offline RBM/RCM greedy algorithm}\label{alg:sketch}
  \begin{algorithmic}
\STATE {\bf 1.} $W_1 = \mbox{span}\left\{u^{\mathcal N}(\mu_1)\right\}$ (with $\mu_1$ arbitrarily chosen).
\STATE {\bf 2.} For {$i = 2,\dots, N$} do:
\begin{itemize}
\item [{\bf a).}] $\mu_i = \displaystyle
  \mbox{\rm arg}\hspace*{-1pt}\max_{\mu\in{\calD}}
 {\Delta_{i-1}(\mu)}$.  \quad {\bf b).} $W_i =
  \mbox{\rm span} \left\{ u_{\mathcal N}(\mu_j),~j\in\{1,\dots,i \} \right\}$.
\end{itemize}
  \end{algorithmic}
\end{algorithm}
Algorithm \ref{alg:sketch} presumes the ability to extremize $\Delta_n(\mu)$ over the continuous parameter domain $\calD$. In practice, one instead discretizes the parameter space, and scans it to generate the ``best" reduced solution space.  The first parameter value $\mu_1$ is randomly chosen, and the accurate truth solution $u^\calN_{\mu_1}$ is computed and stored, forming the first iterate of the solution space, $W_1$. Then for $n \geq 2$, we select the parameter value whose truth solution $u^{\calN}_\mu$ is worst approximated by the reduced solution $u^{(n)}_\mu$; this is accomplished with the error estimator $\Delta_n(\mu)$. The formulation of $\Delta_n(\mu)$ is rigorous, and so one can certify the maximum error over the parameter domain, stopping the iteration whenever this error reaches a desired tolerance level. In practice, a variant of the modified Gram-Schmidt transformation is applied to generate a more stable basis of $W_n$. An acceptable, certifiable error tolerance is usually reached with only $N \ll \calN$ truth snapshots.

\subsection{The offline stage: formation of reduced-order operators}\label{sec:rbm-offline-affine-stuff}
Assume that the snapshots $u^{\calN}_{\mu_{n}}$ for $n=1,\ldots, N$ are precomputed and stored from the previous section. The computation of the reduced-order solution $u^{(N)}_\mu$ and the residual $R_N(\mu; \mathbf{d})$ given by \eqref{eq:rbm-surrogate} and \eqref{eq:rbm-approximation} clearly requires $\mathcal{O}(\calN)$ operations as written because we must compute $\mathbb{L}_\calN(\mu) u^{\calN}_{\mu_j}$ for each new parameter value $\mu$. One condition that breaks this $\calN$-dependence is the assumption that the operator $\mathbb{L}(\mu)$ and the forcing function $f(\x;\mu)$ have \textit{affine} dependence on the parameter, i.e., that 
\begin{align}\label{eq:affine-assumption}
  \mathbb{L}(\mu) &= \sum_{q=1}^{Q_a} a_q^{\mathbb{L}}(\mu) \mathbb{L}_q, &
  f(\cdot;\mu) &= \sum_{q=1}^{Q_f} a_q^{f}(\mu) f_q(\cdot)
\end{align}
where the functions $a_q^{\mathbb{L}}(\mu)$ and $a_q^f(\mu)$ are scalar-valued and $\x$-independent, and the composite operators $\mathbb{L}_q$ and $f_q$ are $\mu$-independent. Many parameterized operators $\mathbb{L}(\mu)$ of interest do satisfy this assumption and there are effective strategies for approximating non-affine operators and functions by affine ones \cite{Barrault_Nguyen_Maday_Patera,Grepl_Maday_Nguyen_Patera}. We assume hereafter that the operator $\mathbb{L}(\mu)$ and the forcing function $f(\cdot;\mu)$ have affine dependence on $\mu$.

The affine dependence assumption allows us to precompute several quantities for use both later in the online stage, and in the offline process 
of selecting the $N$ snapshots. The discrete truth operator $\mathbb{L}_{\calN}$ and forcing function $f\left(\mathbf{\x}; \mu\right)$ likewise have an affine decomposition
\begin{align*}
  \mathbb{L}_{\calN}(\mu) &= \sum_{q=1}^{Q_a} a_q^{\mathbb{L}}(\mu) \mathbb{L}_{\calN,q}, &
  f(\mathbf{\x};\mu) &= \sum_{q=1}^{Q_f} a_q^{f}(\mu) f_q(\mathbf{\x}).
\end{align*}
Once the parameter values $\mu_1, \ldots, \mu_N$ are chosen, the following quantities may be computed and stored in the offline stage:
\begin{subequations}\label{eq:reduced-operators}
\begin{align}
  (M_{r,s})_{j,k} &\triangleq \left(\mathbb{L}_{\calN,r} u^{\calN}_{\mu_j}\right)^T \left(\mathbb{L}_{\calN,s} u^{\calN}_{\mu_k}\right), & 1 \leq j,k \leq N, &\;\;\; 1\leq r \leq s \leq Q_a, \\
  (g_q)_{j} &\triangleq \left(\mathbb{L}_{\calN,q} u^{\calN}_{\mu_j}\right)^T f(\mathbf{\x}; \mu_j), & j = 1,\ldots, N, &\;\;\; q= 1,\ldots, Q_f.
\end{align}
\end{subequations}
The resulting collection of matrices $\mathbf{M}_{r,s}$ are each $N \times N$, and the vectors $\mathbf{g}_q$ are $N \times 1$. 
Similar pre-computations are carried out to achieve $\calN$-independent evaluation of $R_n(\mu)$; see \cite{ChenGottlieb} for details.

\subsection{The online stage: computing a fast solution at a new parameter location}\label{sec:rbm-online}
All the operations that have $\mathcal{O}(\calN)$ complexity were completed in the previous offline sections. During the online stage, all operations are $\calN$-independent. Given a new parameter value $\mu^\ast$, we wish to compute the LSRCM approximation $u^{(N)}_{\mu^\ast}$ to the truth approximation $u^{\calN}_{\mu^\ast}$. This approximation is given by the coefficients $c_j$ in \eqref{eq:rbm-coefficients} with $n = N$. The formuluation \eqref{eq:rbm-coefficients} is a standard least-squares problem for the unknown coefficients. The coefficients $\mathbf{c}$ from \eqref{eq:rbm-coefficients} can be computed using the normal equations and \eqref{eq:reduced-operators}; we need only solve the following square, linear system:
\begin{gather}\label{eq:normal-equations}
  \mathbf{K}(\mu) \mathbf{c} = \mathbf{h}(\mu), \\\nonumber
  \begin{aligned} \mathbf{K}(\mu) &= \sum_{r,s=1}^{Q^a} a^{\mathbb{L}}_r\left(\mu^\ast\right) a^{\mathbb{L}}_s\left(\mu^\ast\right) \mathbf{M}_{r,s}, &\;\; 
  \mathbf{h}(\mu) &= \sum_{q=1}^{Q^f} a^{f}_q\left(\mu^\ast\right) \mathbf{g}_q\end{aligned}.
\end{gather}
The linear system \eqref{eq:normal-equations} is invertible and $N \times N$ so the coefficients $\mathbf{c}$ may be computed in an efficient and straightforward manner.  We emphasize that algorithmic methods to form and solve the above system have computational complexities that are \textit{independent} of the truth discretization parameter $\calN$. In this way, for each new $\mu^\ast$, we can use $\calN$-independent operations to compute the LSRCM approximation $u^{(N)}_{\mu^\ast}$. For several problems of interest \cite{ChenGottlieb}, the RCM solution $u^{(N)}_{\mu^\ast}$ converges spectrally to the truth solution $u^{\calN}_{\mu^\ast}$ with respect to $N$.

\section{The Local Radial Basis Function Method}
\label{sec:rbf}

The truth solution for the R$^2$BF method will be a large-stencil finite difference solution that is based on a meshfree radial basis function collocation method. Like finite difference methods, this RBF analogue has the advantages of low computational cost due to relatively sparse matrices. It also features flexibility in distributing collocation points for discretization on an irregular domain. The method is widely known as RBF in finite difference mode (RBF-FD) \cite{tolstykh2003,Wright2006}, or RBF differential quadrature (RBF-DQ) \cite{shudingyeo2005}.

The essence of this approach consists in defining differentiation matrices to transform a linear PDE operator $\mathbb{L}$ into a linear algebra problem $\mathbb{L}_{\calN}$. What differentiates one method from the other is the methodology of discretization. The local RBF discretization step consists of two ingredients: laying out points in the irregular domain and forming the differentiation matrices via high order local interpolants. In the following subsections we will describe our approach for each of these ingredients.  

\subsection{Background}

\subsubsection{Global approximation with radial basis functions}
\label{sec:global-rbf}
One of the simplest scenarios in which RBFs are employed is in global approximation methods. The basic idea is to form an interpolant that approximates a function $u(\x)$ based on data values $u_k$ given at scattered nodes $C^N = \left\{\x_k\right\}_{k=1}^\calN$. The RBF interpolant has the form
\begin{align}
  \label{eq:rbf-approximation}
  s(\x) =  \sum_{k=1}^{\calN} \lambda_k \phi(\varepsilon \|\x - \x_k\|),
\end{align}
where $\x$ denotes a point in $\R^d$, $\phi(r)$ is a radial basis function, $\varepsilon$ is a shape parameter, and $\| \cdot \|$ is Euclidean distance. For the moment, we assume that the nodes $\x_k$ (sometimes called \textit{centers} or \textit{grid points}) are a priori prescribed.

The function $\phi(\cdot)$ depends on the distance between points and not necessarily their orientation; one may generalize this approach to non-radial kernel functions but we stick to radial approximations in this paper. The choice of radial basis function $\phi(r)$ is clearly important since it directly affects the actual reconstruction $u(\x)$. Common choices of $\phi(r)$ are:
\begin{itemize}
\item Infinitely smooth functions: Multiquadrics (MQ) $\phi(r) = \sqrt{1 + r^2}$, Inverse Multiquadrics (IMQ) $\phi(r) = \frac{1}{\sqrt{1 + r^2}}$, and Gaussian (GA) $\phi(r) = e^{-r^2}$;
\item Piecewise smooth: Cubic $\phi(r) = r^3$, thin plate splines $\phi(r) = r^2 \ln(r)$;
\item Wendland's compactly supported piecewise polynomials \cite{Wendland1995}.
\end{itemize}
For a more comprehensive list,  we refer to \cite{Buhmann03,wendland,Fasshauer07}.  In this paper we use the inverse multiquadrics (IMQ). 

In order to solve for the coefficients $\bl = \left\{\lambda_j\right\}_{j=1}^\calN$, we collocate the interpolant $s(\x)$ to satisfy the interpolation conditions $s(\x_k) = u_k$, $k=1,\ldots,\calN$. This results in a system of linear equations
\begin{align}
\label{eq:rbf-system}
\begin{bmatrix}
\phi(\varepsilon \|\x_1 - \x_1\|) & \cdots & \phi(\varepsilon \|\x_1 - \x_\calN\|) \\
\vdots & \ddots & \vdots \\
\phi(\varepsilon \|\x_\calN - \x_1\|) & \cdots & \phi(\varepsilon \|\x_\calN - \x_\calN\|) \\
\end{bmatrix}
\begin{bmatrix}
\lambda_1 \\
\vdots \\
\lambda_\calN \\
\end{bmatrix}
=
\begin{bmatrix}
u_1 \\
\vdots \\
u_\calN
\end{bmatrix}
\Longrightarrow \bm{\Phi} \bl = \mathbf{u},
\end{align}
where the matrix $\bm{\Phi}$ is symmetric with entries $\Phi_{ij} = \phi(\varepsilon \|\x_i - \x_j\|)$ and $\mathbf{u} = [u_1, \ldots, u_\calN]^T$. The interpolation matrix $\bm{\Phi}$ is guaranteed to be non-singular for many choices of $\phi$ \cite{Micchelli86}.

Computing derivatives of the RBF interpolant $s(\x)$ in \eqref{eq:rbf-approximation} is a straightforward process, which can be simply done by summing the weighted derivatives of the basis functions. For example, with $x$ the first component of the vector $\x \in \R^d$, then
\begin{align}
 \label{eq:rbf-sx}
 \ppx{x}s(\x) =  \sum_{k=1}^{\calN} \lambda_k \ppx{x} \phi(\varepsilon \|\x - \x_k\|),
\end{align}
computes the first derivative of $s(\x)$ with respect to $x$ at any location $\x$. Since $\bl = \bm{\Phi}^{-1} \mathbf{u}$, then \eqref{eq:rbf-sx} can be written more compactly as
\begin{align}
\label{eq:rbf-diffmat}
\ppx{x} s(\x) = \bm{\Phi}_{x} \bm{\Phi}^{-1} \mathbf{u} \triangleq \bm{D}_{x} \mathbf{u},
\end{align}
where $\bm{\Phi}_{x} = \left[\ppx{x} \phi(\varepsilon \|\x - \x_1\|),\ldots,\ppx{x} \phi(\varepsilon \|\x - \x_\calN \|)\right]$. The matrix $\bm{D}_{x} = \bm{\Phi}_{x} \bm{\Phi}^{-1}$ has $\calN$ columns and is commonly called the RBF ``differentiation matrix". The number of rows in $\bm{D}_{x}$ depends on where the differentiated interpolant $\ppx{x} s$ should be evaluated, and we commonly want to evaluate on the same collocation points $C^\calN$. Thus, the matrix $\bm{\Phi}_x$ is $\calN \times \calN$ with entries
\begin{align}
\bm{\Phi}_x
=
\begin{bmatrix}
  \phi_{x_1}(\varepsilon \|\x_1 - \x_1\|) & \cdots & \phi_{x_1}(\varepsilon \|\x_1 - \x_\calN\|) \\
\vdots & \ddots & \vdots \\
\phi_{x_\calN}(\varepsilon \|\x_\calN - \x_1\|) & \cdots & \phi_{x_\calN}(\varepsilon \|\x_\calN - \x_\calN\|) \\
\end{bmatrix},
\end{align} 
where, in a slight abuse of notation, $\phi_{x_j}\left(\varepsilon \left\|\x_j - \x_k\right\|\right) = \ppx{x_j} \phi\left(\varepsilon \left\|\x_j - \x_k\right\|\right)$ is the partial derivative of the shape function with respect to the first component $x_j$ of $\x_j = (x_j, y_j, z_j)$. Higher order differentiation matrices or derivatives with respect to different variables (e.g $\bm{D}_y$,  $\bm{D}_{z}$,  $\bm{D}_{xx}$, etc) can be computed in the same manner.

Note that in general there are several theoretical and implementation aspects of global RBF approximation that are important to consider in practice. As an example, in cases where data values $u_k$ are sampled from smooth functions, the constructed interpolant $s(\x)$ can be highly accurate if infinitely smooth basis function $\phi(r)$ with small values of shape parameters $\varepsilon$ are used. However, as $\varepsilon$ becomes smaller, the basis functions becomes ``flatter" \cite{DriscolFornberg00r}, which leads to an ill-conditioned matrix $\bm{\Phi}$. Techniques to mitigate these kinds of interpolation instability issues include contour integration \cite{FornbergWright2004}, QR Decomposition \cite{FornbergPiretSisc2007,FornbergLarssonFlyer2009,FasshauerMcCourt2012}, Hilbert-Schmidt SVD methods \cite{cavoretto2014}, and SVD methods utilizing rational interpolants \cite{gonnet2011}. RBF methods that do not employ any of those techniques are usually called ``RBF-Direct". In this work, we only utilize RBF-Direct approximation methods to form the RBM truth approximation for accurately solving problems on irregular geometries. The use of more stable RBF interpolant algorithms is not the central focus of this work and will be left for future study.

The interpolation instability issues result not only from linear algebraic considerations. A judicious placement of nodes plays a crucial role through the classical problem of interpolation stability, as measured by Lebesgue constants and manifested through the Runge phenomenon. In one dimensional cases, oscillatory behavior near the boundaries do appear when equally-spaced points are used as $\calN$ becomes larger. This empirical observation is supported by potential-theoretic analysis \cite{Platte2005,PlatteIMA}. 
A stable approximation scheme for analytic functions on equally-spaced samples cannot converge exponentially \cite{PlatteTrefethenKuijlaars}. For higher dimensional cases, the precise distribution of scattered points is not yet well-understood although some numerical evidence is shown in \cite{FornbergLarssonFlyer2009}. 

For this paper and for modest size $\calN$, we use an algorithm that is based on the power function \cite{marchi_near-optimal_2005} to select optimal distribution of nodes. For this purpose, we will assume that $\phi$ is a positive-definite function, meaning that for any collection of $\calN$ distinct nodes $\x_k$, the interpolation matrix $\bm{\Phi}$ defined in \eqref{eq:rbf-system} satisfies
\begin{align}\label{eq:positive-definiteness}
  \mathbf{v}^T \bm{\Phi} \mathbf{v} &> 0, & \forall\; \mathbf{v} &\in \R^{\calN}
\end{align}
We make this assumption for two reasons: it ensures a unique solution to \eqref{eq:rbf-system}, and it allows us to construct a well-defined discrete norm on vectors. The IMQ basis functions that we use here satisfies the positive-definite condition.

\subsubsection{The native space norm}

Given a positive-definite function  $\phi$, we introduce the collection of functions $\phi^{x}$ centered at every location in the physical domain:
\begin{align}\label{eq:v-definition}
  \mathcal{V} &= \left\{ \phi^{\x} \left(\cdot\right) \, \big| \,\x \in \Omega\right\}, & \phi^{\x}(\y) &\triangleq \phi\left(\varepsilon\left\|\y - \x\right\|\right)
\end{align}
and define a proper norm on any function formed from a finite linear combination of elements in $\mathcal{V}$:
\begin{align}\label{eq:h-norm}
  u(\cdot) &= \sum_{k=1}^{\calN} \lambda_k \phi^{\x_k}(\cdot), & \left\| u \right\|^2_{\mathcal{H}} &\triangleq \sum_{1 \leq j,k \leq \calN} \lambda_j \lambda_k \phi^{\x_k}(\x_j).
\end{align}
Above, we have used $\x_k$ to indicate any selection of distinct points from $\Omega$. The \textit{native space} associated with the function $\phi$ is the $\|\cdot\|_\mathcal{H}$ closure of finite linear combinations of $\mathcal{V}$. The native space is a Hilbert space and we will denote it by $\mathcal{H}$. This construction is relatively abstract, but it is wholly defined by $\phi$, and one can characterize $\mathcal{H}$ as being equivalent to more standard $L^2$ Sobolev spaces by considering the decay rate of the Fourier transform of $\phi$ \cite{wendland}.

The inner product on $\mathcal{H}$ for $u = \sum_k \lambda_k \phi^{\x_k}$ and $v = \sum_k \rho_k \phi^{\x_k}$ is given by
\begin{align}\label{eq:h-inner-product}
	\left\langle u, v \right\rangle_{\mathcal{H}} \triangleq \sum_{1 \leq j,k \leq \calN} \lambda_j \rho_k \phi^{\x_k}(\x_j).
\end{align}
Given data $\bm{u}$ as in \eqref{eq:rbf-system}, we will use the notation $\|\bm{u}\|_{\mathcal{H}}$ to denote the corresponding $\mathcal{H}$-norm of the global interpolant \eqref{eq:rbf-approximation} and from \eqref{eq:h-norm} this norm is
\begin{align}\label{eq:discrete-h-norm}
  \left\| \bm{u} \right\|_{\mathcal{H}}^2 \triangleq \bm{\lambda}^T \bm{\Phi} \bm{\lambda} = \bm{u}^T \bm{\Phi}^{-1} \bm{u} = \left\| \bm{S}^{-1} \bm{u} \right\|^2,
\end{align}
with $\left\| \bm{v} \right\|$ the standard Euclidean norm on vectors $\bm{v}$, and $\bm{S}$ is any matrix satisfying $\bm{S} \bm{S}^T = \bm{\Phi}$. For concreteness, we will take $\bm{S}$ to be the positive-definite square root of $\bm{\Phi}$, i.e. since from \eqref{eq:positive-definiteness} $\bm{\Phi}$ is diagonalizable with positive spectrum: 
\begin{align*}
  \bm{\Phi} = \bm{V} \bm{\Lambda} \bm{V}^T \hskip 15pt \Longrightarrow \hskip 15pt
  \bm{S} = \bm{V} \sqrt{\bm{\Lambda}} \bm{V}^T
\end{align*}
However this choice is not necessary and in what follows one can replace $\mathbf{S}$ by, e.g., the Cholesky factor for $\bm{\Phi}$.

The norm on vectors $\|\cdot\|_{\mathcal{H}}$ defined above will be the RBF-analogue of a continuous norm in our RBM collocation framework.

\subsubsection{Choosing nodes: The Discrete Power Function method}\label{sec:rbf-power-function}

Traditional collocation methods, such as Fourier or Chebyshev pseudospectral methods, require a particular grid structure. Avoiding this restriction on irregular geometries is one of the major reasons why we turn to radial basis functions, which are mesh-free. However, it is well known that the accuracy of RBF methods is heavily influenced by the location of the centers $\x_k$. While some RBF nodal arrays are known to produce accurate reconstructions, these are mainly restricted to canonical domains -- tensor product or symmetric domains. 

\begin{wrapfigure}{r}{0.35\textwidth}
\includegraphics[width=.375\textwidth]{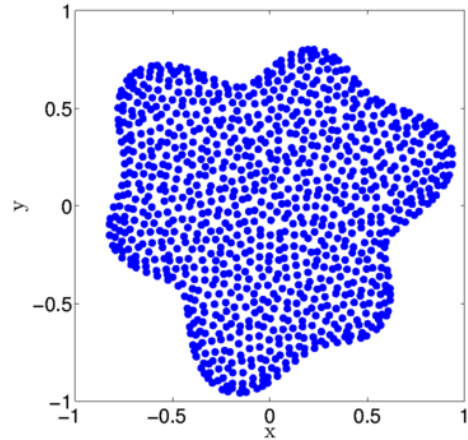}
\caption{\small RBF points resulting from the Power Function method}
\label{fig:pointsmatrix1}
\end{wrapfigure}

We are interested in computations on irregular domains, and therefore require a method for selecting region-specific nodes that will enhance the accuracy of the reconstruction. This is important since the inaccuracy of the RBF truth approximation will lead directly to that of the LSRCM 
solution. To generate nodes we make use of the Power Function Method applied on a discrete candidate set. We present a short discussion of this method in an effort to keep our presentation self-contained. Simplistic and effective, it relates to the reduced basis method by employing the same type of greedy algorithm. The interested reader may refer to \cite{marchi_near-optimal_2005} for a thorough discussion of this method; we provide an alternate description below in the context of the reduced basis framework.

Recall that $\Omega$ represents the physical domain, and that $\mathcal{V}$ from \eqref{eq:v-definition} is the collection of RBF shape functions centered at every point in $\Omega$. We consider this space as a collection of parameterized functions; the parameter is the nodal center $\x$.

The Power Function method selects RBF nodal centers by forming a reduced basis approximation to $\mathcal{V}$. From Algorithm \ref{alg:sketch}, we see that the reduced basis method greedily forms an approximation space by computing parametric values that maximize an error criterion. This error criterion is defined in terms of an error norm. For the space $\mathcal{V}$, it is natural to choose this norm to be $\|\cdot\|_{\mathcal{H}}$, the norm on the $\phi$-native space $\mathcal{H}$. Then applying an RBM offline selection of parameter values from $\mathcal{V}$ results in following optimization scheme, which mirrors Algorithm \ref{alg:sketch}:
\begin{align}\label{eq:rbm-x-iteration}
  \x_{n+1} &= \argmax_{\x \in \Omega}\, \mathrm{dist}_{\mathcal{H}}\left( \phi^{\x}, \mathcal{V}_n\right) = \argmax_{\x \in \Omega} \left\| \phi^{\x} - P_{\mathcal{V}_n} \phi^{\x} \right\|_{\mathcal{H}}, \\
  \mathcal{V}_{n+1} &= \mathrm{span} \left\{ \phi^{\x_1}, \ldots, \phi^{\x_{n+1}} \right\},
\end{align}
where $P_{\mathcal{V}_n}$ is the $\mathcal{H}$-orthogonal projector onto $\mathcal{V}_n$. To start the iteration, $\mathcal{V}_0 = \left\{0\right\}$ is the trivial subspace. Since the inner product of $\mathcal{H}$ is \eqref{eq:h-inner-product} and $\phi$ is a radial kernel, then
\begin{align}\label{eq:native-space-inner-product}
  \left\langle \phi^{\x}(\cdot), \phi^{\y}(\cdot) \right\rangle_{\mathcal{H}} = \phi^{\x}(\y) = \phi^{\y}(\x).
\end{align}
Thus, all the inner products and norms for the optimization can be computed simply by evaluating the shape function $\phi$. The optimization \eqref{eq:rbm-x-iteration} is the Power Function method of \cite{marchi_near-optimal_2005}.

We implement this method on finite candidate sets, e.g. substituting $\Omega$ with $Y^M = \left\{\y_{1}, \ldots, \y_{M}\right\} \subset \Omega$ with large $M$. From \eqref{eq:native-space-inner-product}, we need only form the interpolation matrix $\bm{\Psi}$ with entries $(\Psi)_{n,m} = \phi^{\y_n}(\y_m)$ for $m,n=1, \ldots, M$. Then the first $\calN$ points produced by the iteration \eqref{eq:rbm-x-iteration} can be computed by standard numerical linear algebra operations on $\bm{\Psi}$. Either the first $\calN$ iterations of a full-pivoting $L U$ decomposition on $\bm{\Psi}$, or the first $\calN$ iterations of a pivoted Choleksy decomposition produce the first $\calN$ points of the Power Function method. In practice, we use the Cholesky decomposition method: because we only need the pivoting indices, the required work can be completed in $\mathcal{O}(\calN^2 + M \calN)$ time with $\mathcal{O}(\calN^2 + M)$ storage, and we need only perform this operation once as part of setting up the truth approximation. An example of the result of this algorithm for a two-dimensional domain is given in Figure \ref{fig:pointsmatrix1}.

\subsection{RBF-FD method}\label{sec:rbf-fd}

\subsubsection{Local RBF Finite Difference Differentiation Matrices}

The differentiation matrices obtained by using global RBF interpolant based on infinitely smooth basis functions, such as IMQ, produce dense matrices. This is due to the fact that all $\calN$ nodes in the domain are used to generate the interpolant. The cost for inverting the dense matrix $\bm{\Phi}$, though done only once, is manageable for modest size $\calN$ but can be prohibitively expensive as it grows. One way to avoid dense matrix operations is to borrow ideas from finite-difference approximations, whose differentiation matrices are sparse. However, weights (i.e. entries of differentiation matrices) are difficult to compute when local stencils are scattered and differ in number and distributions. In order to mitigate these issues, local RBF interpolants and derivatives are used to compute stencil weights instead of using Taylor series, as in finite-difference methods. This is a straightforward approach for computing flexible finite-difference-like weights. As mentioned in the previous sections, this approach is known as a generalized finite-difference method or as RBF-FD.

We will illustrate the process of generating differentiation matrices in 2D; the generalization to higher dimensions is straightforward. The $\calN$ nodal points in $\Omega$ chosen by the Power Function method in Section \ref{sec:rbf-power-function} are denoted as $C^\calN =\{\x_1,\dots,\x_{\calN}\}$. Let $C_{(j)} = \{\x_{j_k}: k = 1, \dots, n_{\rm {loc}}^j \} \subset C^\calN$ be the (local) set of  neighboring points of $\x_j$ with $\x_{j_1} = \x_j$.  $C_{(j)}$ will form a stencil for $\x_j$. We call the point $\x_j$ the \emph{master} node of the set $C_{(j)}$. All other $n^j_{\textnormal{loc}}-1$ points in the set $C_{(j)}$ are \emph{slave} nodes.  As a simple example, Figure \ref{fig:domfigure} illustrates $\calN=101$ collocation points on a irregular domain $\Omega$ with a local stencil $C_{(1)}$ of $9$ slave nodes. While one can vary the size of the local stencil $n^j_{\textnormal{loc}}$ with respect to the master location $\x_j$, we often set $n^j_{\textnormal{loc}}$ to be the same in order to guarantee that all local interpolants provide approximately the same accuracy. An approach with different local stencil sizes is useful when there are local accuracy considerations (e.g., boundary layers).

\begin{figure}[h]
\input{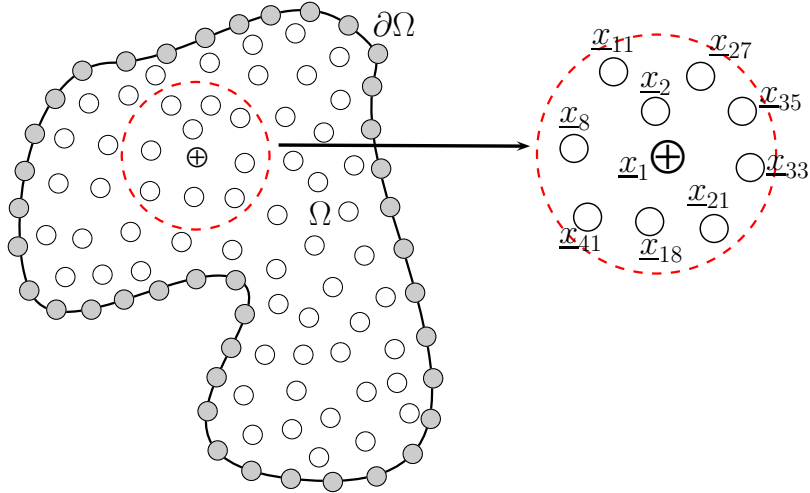}
\caption{\textbf{Left}: Collocation points on a 2D irregular domain. White nodes are points inside the domain and grey nodes are boundary points.  \textbf{Right:} An example of a 10-point-stencil for computing differentiation weights 
at $\x_1$. }
\label{fig:domfigure}
\end{figure}

Following the global formulation provided in Section \ref{sec:global-rbf}, the local interpolant $s_j(\x)$ with master node $\x_j$ takes the form
\begin{align}
  \label{eq:rbf-locapprox}
  s_{(j)}(\x) =  \sum_{k=1}^{n^j_\textnormal{loc}} \lambda_{j_k} \phi(\varepsilon \|\x - \x_{j_k}\|),
\end{align}
where  $\x_{j_k} \in C_{(j)}$. As in \eqref{eq:rbf-diffmat}, it follows that the differentiation matrix or derivative weights with respect to $x$ (the first component of $\x$) evaluated at the master node $\x_j$ can be easily obtained as
\begin{align}
  \bm{D}_{x(j)} = \left[\phi_{x_j}(\varepsilon \|\x_j - \x_{j_1}\|),\ldots,\phi_{x_j}(\varepsilon \|\x_j - \x_{j_{n^j_\textnormal{loc}}}\|)\right] \bm{\Phi}_{(j)}^{-1} = \bm{\Phi}_{x(j)} \bm{\Phi}_{(j)}^{-1},
\end{align} 
where ${D}_{x(j)}$ is of size $1 \times n^j_\textnormal{loc}$ and $\bm{\Phi}_{(j)}$ is the local interpolation matrix defined by the local problem \eqref{eq:rbf-locapprox}. Generating the $\calN \times \calN$ matrix $\bm{D}_1$ is done by computing ${D}_{x(j)}$ for $j = 1, \ldots, \calN$ and placing these vectors in the corresponding rows and columns of $\bm{D}_1$. The pseudocode for the process is shown in Algorithm \ref{alg:rbfdiffmat}.
\begin{algorithm}[htp]
  \caption{RBF-FD first derivative matrix with respect to $x$}
  \label{alg:rbfdiffmat}
  \begin{algorithmic}
  \STATE{\bf Input:} $C^\calN = \{\x_1,\ldots,\x_\calN \}$, $\phi(r)$, $\varepsilon$, $n_\textnormal{loc}$.
  \STATE{\bf Output:} $\bm{D}_x$
  \FOR{$j = 1$ to $\calN$}
  \STATE Find $C_{(j)} = \{\x_{j_k}: k = 1, \dots, n_{\textnormal{loc}} \} \subset C^\calN$, i.e $n_\textnormal{loc}$ nearest-neighbors of $\x_j$.
  \STATE Compute 
  ${D}_{x(j)} = \left[\phi_{x_j}(\varepsilon \|\x_j - \x_{j_1}\|),\ldots,\phi_{x_j}(\varepsilon \|\x_j - \x_{j_{n_\textnormal{loc}}}\|)\right] \bm{\Phi}_{(j)}^{-1} = {\bm{\Phi}_{x(j)}} \bm{\Phi}_{(j)}^{-1}$
  \STATE Store elements of  ${D}_{x(j)}$ as entries of $\bm{D}_x(j,j_1),\ldots,\bm{D}_x(j,j_{n_\textnormal{loc}})$ accordingly.
  \ENDFOR
  \end{algorithmic}
\end{algorithm}
Higher order differentiation matrices or derivatives with respect to different variables can be computed in the same manner
using the appropriate partial derivatives of $\phi(r)$ inside the for-loop of Algorithm \ref{alg:rbfdiffmat}. This algorithm generates $\calN \times \calN$ differentiation matrices with only 
$n_{\textnormal{loc}} \calN$ non-zero entries. The computational cost is $O(n_\textnormal{loc}^3 \calN)$ 
dominated by $\calN$ inversion processes of $\bm{\Phi}_{(j)}$. 
See \cite{Wright2006,bayona2011,bayona2012,fornberg2013,larsson2013} for stencil weights for RBF-FD for Gaussian and Multiquadric with constant shape parameters.

Once we have computed the RBF-FD differentiation matrices, we can use them to numerically solve boundary value problems or initial boundary value problems.
As a motivating example, we consider the following 2D boundary value problem
\begin{subequations}\label{eq:example-pde}
\begin{align}\label{eq:pde-operator}
  -u_{xx} - \mu^1 u_{yy} - \mu^2 u&= f(x,y), & (x,y) &\in \Omega,
\end{align}
with boundary conditions 
\begin{align}\label{eq:pde-boundary-conditions}
  u(x,y) &= g(x,y), & (x,y) &\in \partial \Omega.
\end{align}
\end{subequations}
The parameters $\mu^1$ and $\mu^2$ are constants. 

We then discretize $\Omega$ with $\calN_i$ nodes and $\partial \Omega$ with $\calN_b$ boundary nodes 
with a total
number of nodes $\calN = \calN_i + \calN_b$. 
We order the indices so that $\calN_i$ interior points are followed  
by $\calN_b$ boundary nodes. The discretized version of equation \eqref{eq:pde-operator} becomes
\begin{align}
  \label{eq:lineqL}
  - \bm{D}_{xx} \underline{u} - \mu^1 \bm{D}_{yy} \underline{u}   - \mu^2\bm{\mathcal{I}_i} \underline{u} =\underline{f}, 
\end{align}
or equivalently $\mathbf{L}(\underline{\mu}) \underline{u}  = \underline{f}$, with $\underline{\mu} = \left(\mu^1, \mu^2\right)$. The matrices $\bm{D}_{xx}$, $\bm{D}_{yy}$, which can be obtained from Algorithm \ref{alg:rbfdiffmat}, and consequently $\mathbf{L}(\underline{\mu})$ are of size $\calN_i \times \calN$. $\bm{\mathcal{I}_i}$ is an $\calN_i \times \calN$ matrix with values 1 at entries $(j,j)$ for $j= 1 \ldots \calN_i$ and zeros everywhere else.

For Dirichlet boundary conditions, the discretized version of the equation \eqref{eq:pde-boundary-conditions} becomes
\begin{align}
\label{eq:lineqLbc}
\bm{\mathcal{I}_b} \underline{u} = \underline{g},
\end{align}
where $\bm{\mathcal{I}_b}$ is an $\calN_b \times \calN_b$ matrix with values 1 at entries $(j,j)$ for $j=\calN_i+1,\ldots,\calN$ and zeros everywhere else. The systems \eqref{eq:lineqL}
and \eqref{eq:lineqLbc} are then augmented to form an $\calN \times \calN$ linear systems as shown in \eqref{eq:augsystem}.

\begin{align}
\label{eq:augsystem}
\begin{blockmatrixtabular}
\valignbox{
\begin{blockmatrixtabular}
\fblockmatrix       [0.8,1.0,0.8]{1.5in}{0.47in}{$\mathbf{L}(\underline{\mu})$} \\
\begin{blockmatrixtabular}
\fblockmatrix       [1.0,1.0,0.8]{1.0in}{0.47in}{$\bm{0}$} &
\Imatrix       [1.0,0.8,0.8]{0.45in}{0.47in}{$\bm{\mathcal{I}_b}$}
\end{blockmatrixtabular}
\end{blockmatrixtabular}
}
\hspace{-0.15in}
&
\valignbox{\fblockmatrix                    {0.15in}{1.0in}{$\underline{u}$}}&
\valignbox{\mblockmatrix                    {0.15in}{1.0in}{$=$}}&
\valignbox{
\begin{blockmatrixtabular}
\fblockmatrix       [0.8,0.8,1.0]{0.15in}{0.47in}{$\underline{f}$} \\
\fblockmatrix       [0.6,0.7,1.0]{0.15in}{0.47in}{$\underline{g}$}
\end{blockmatrixtabular}
}
\end{blockmatrixtabular}
\end{align}

\subsection{Numerical validation of the RBF-FD as truth solver}
\label{sec:poissontests}
Putting all the above pieces together, we have a radial basis function finite difference (RBF-FD) method. This section considers convergence studies to validate the accuracy of the method. 
To that end, we run two kinds of convergence tests on four equations \eqref{eq:awave2d} - \eqref{eq:diff3d}, emulating $h$-adaptive and $p$-adaptive refinement convergence studies from classical finite element approaches:
\begin{enumerate}
  \item ``$n_{\text{loc}}$ convergence" -- Refinement in the RBF-FD stencil size $n_{\text{loc}}$ while holding $\calN$ fixed is akin to $p$-refinement and so we expect exponential convergence in this case.
  \item ``$\calN$ convergence" -- Refinement in the truth parameter $\calN$ for a fixed RBF-FD stencil size $n_{\text{loc}}$ is akin to $h$-refinement and so we expect algebraic convergence as a result. 
\end{enumerate}
\begin{subequations}\label{eq:example-pdes-2d}
\begin{align}\label{eq:awave2d}
  \left\{
  \begin{aligned}
    -u_{xx} - \mu^1 u_{yy} - \mu^2 u &= f(\x), & \x &\in \Omega \\
    u &= g, & \x &\in \partial \Omega
  \end{aligned}
  \right.
  & \hskip 30pt
  \mu \in \calD = [0.1,4] \times [0,2]
\end{align}
\begin{align}\label{eq:diff2d}
  \left\{
  \begin{aligned}
    (1+\mu^1 x) u_{xx} + (1+\mu^2 y) u_{yy} &= f(\x), & \x &\in \Omega \\
    u &= g, & \x &\in \partial \Omega
  \end{aligned}
  \right.
  & \hskip 30pt
  \mu \in \calD = [-0.99,0.99]^2,
\end{align}
\end{subequations}
\begin{subequations}\label{eq:example-pdes-3d}
\begin{align}\label{eq:awave3d}
  \left\{
  \begin{aligned}
    -u_{xx} - \mu^1 u_{yy} - u_{zz}- \mu^2 u &= f(\x), & \x &\in \Omega \\
    u &= g, & \x &\in \partial \Omega
  \end{aligned}
  \right.
  & \hskip 30pt
  \mu \in \calD = [0.1,4] \times [0,2]
\end{align}
\begin{align}\label{eq:diff3d}
  \left\{
  \begin{aligned}
    (1+\mu^1 x) u_{xx} + (1+\mu^2 y) u_{yy} + z u_{zz}&= f(\x), & \x &\in \Omega \\
    u &= g, & \x &\in \partial \Omega
  \end{aligned}
  \right.
  & \hskip 30pt
  \mu \in \calD = [-0.99,0.99]^2.
\end{align}
\end{subequations}
These four equations are elliptic partial differential equations with a two-dimensional parameter $\mu = \left(\mu^1, \mu^2\right)$. 
Among them, equations \eqref{eq:example-pdes-2d} are spatially two-dimensional problems, and equations \eqref{eq:example-pdes-3d} are spatially three-dimensional problems. The problems \eqref{eq:awave2d} and \eqref{eq:awave3d} are examples of the Helmholtz equation, describing frequency-domain solutions to Maxwell's equations of electromagnetics. Examples \eqref{eq:diff2d} and \eqref{eq:diff3d} are steady-state diffusion equations with anisotropic diffusivity coefficients. 
Finally, we choose the inverse multiquadric (IMQ) shape function with shape parameter $\varepsilon=3$ for 2D 
and $0.75$ for 3D. 
To showcase the functionality of the method, we choose the computational domain $\Omega$ to be irregular: 
for the 2D problems, 
the domain $\Omega$ is centered at the origin with  boundary $\partial \Omega$ given by  the  parametric equation in polar coordinates $ r(\theta) = 0.8+0.1(\sin(6\theta)+\sin(3\theta)), \; \; 0 \leq \theta \leq 2 \pi $ (see Figure \ref{fig:pointsmatrix1}); for 3D ones, $\Omega$ is the closed interior of a solid object defined by the following parametric surface:
\begin{align}
\label{eq:parameq3D}
x^2 + y^2 + z^2 - \sin(2x)^2\sin(2y)^2\sin(2z)^2 = 1.
\end{align}

These same four equations will be used again to test the reduced solver later where we will have a 
fixed forcing function $f(x)$ and boundary data $g(x)$ (and thus parametric solution). For the purpose of 
validating the RBF-FD solver in this section, we 
choose  $f(x)$ and $g(x)$ such that the exact solution $u$ is known. 
They are listed in Table \ref{tab:exactsoln} and depicted in Figure \ref{fig:rbffdtestconv}.
\begin{table}[ht]
\renewcommand{\arraystretch}{1.8}
\begin{tabular}{|c|c|c|}
\hline
 & 2D & 3D\\
 \hline
 Test 1 & $u(x,y) = \sin(\pi x) \sin(\pi y)$ & $u(x,y,z) = \sin(\pi x) \sin(\pi y) \sin(\pi z)$\\
 \hline 
 Test 2 & $u(x,y) = e^{-20(x^2+y^2)}-x^2+y^3$ & $u(x,y,z) = e^{-20(x^2+y^2+z^2)}-x^2+y^3-z^2$\\
\hline
\end{tabular}
\caption{Parameter-independent exact solutions for the validation of the solver.}
\label{tab:exactsoln}
\end{table}
\begin{figure}[htb]
\begin{minipage}{\textwidth}
\begin{minipage}{0.24\textwidth}
\scalebox{0.475}{\includegraphics{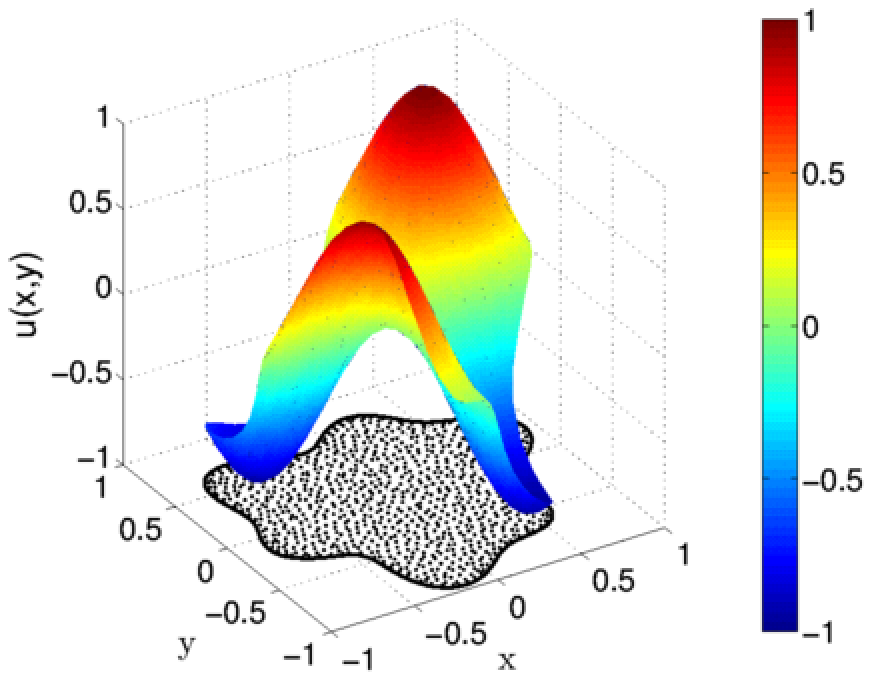}}
\end{minipage}
\begin{minipage}{0.24\textwidth}
\scalebox{0.475}{\includegraphics{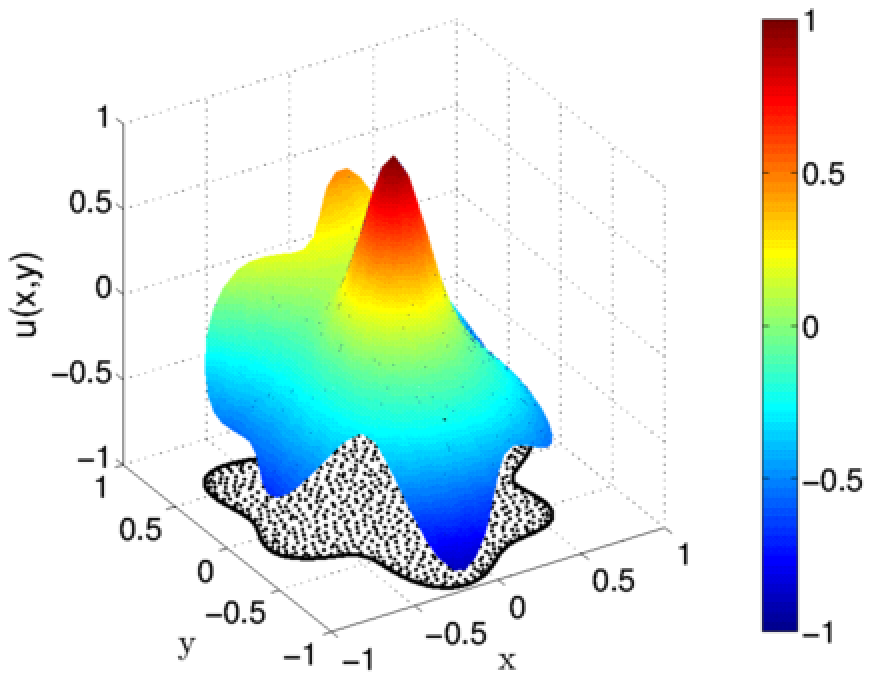}}
\end{minipage}
\begin{minipage}{0.235\textwidth}
\scalebox{0.55}{\includegraphics{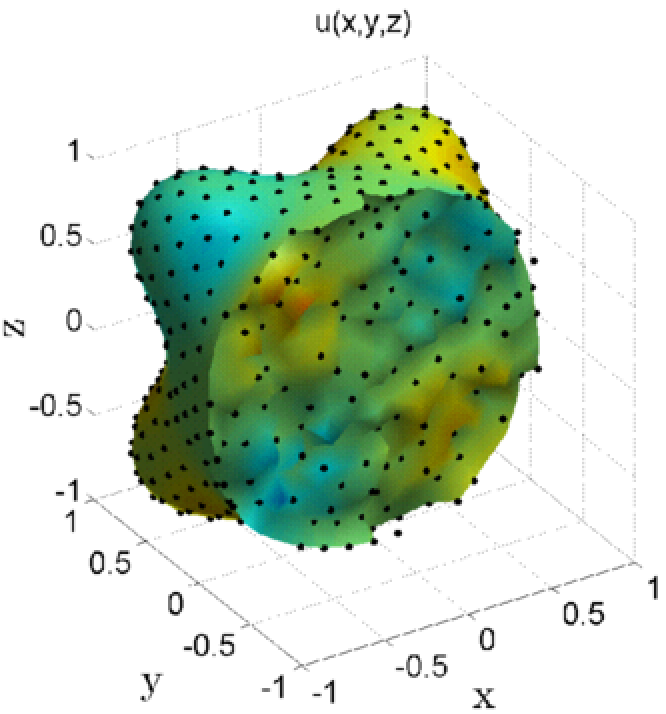}}
\end{minipage}
\begin{minipage}{0.235\textwidth}
\scalebox{0.55}{\includegraphics{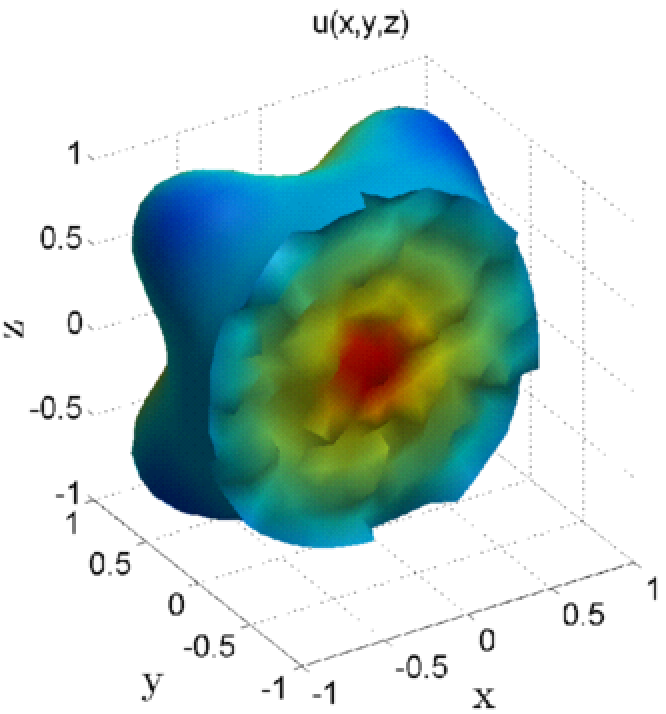}}
\end{minipage}
\end{minipage}
\caption{
Plot of the exact solutions: shown from left to right are 2D Test 1, 2D Test 2, 3D Test 1, 3D Test 2.}
\label{fig:rbffdtestconv}
\end{figure}

\subsubsection*{Accuracy of the truth solver}

The $\calN-$history of convergence results are shown in Figure \ref{fig:algcov_awdiff}, and that for $n_\text{loc}$ is shown in 
Figure  \ref{fig:geocov_awdiff}. The error shown in these figures is the $\mu$-maximum spatial $\ell^2$ norm over a candidate set $\Xi$ of $10,000$ equi-spaced parameters in $\calD$.

We observe that these numerical results indicate that the RBF-FD solver is robust and 
provides solutions that converge to the exact solution algebraically with respect to $\calN$, and exponentially with respect to $n_\text{loc}$. 
We note, however, that the error for the $n_\text{loc}$ convergence does level off  
with large number of local stencil points. 
This issue is a result of the ill conditioning observed in traditional RBF methods as the centers become closer.
It is partially mitigated by the use of greedily-selected optimal nodes obtained by using the discrete power 
function method described in Section \ref{sec:rbf-power-function}. Without it, the errors may eventually grow instead of 
flattening out due to ill-conditioning.

Once the {\it a priori} expectation of $hp-$type of convergence is confirmed, we have a reliable truth solver in RBF-FD. 
Moreover, these studies provide a reference for the accuracy of the truth approximations underlying the reduced solver in the next section. 
This accuracy will then provide a rough guideline for selecting the total number of reduced bases.

\begin{figure}[htb]
\begin{minipage}{\textwidth}
\begin{minipage}{0.24\textwidth}
\scalebox{0.75}{\includegraphics{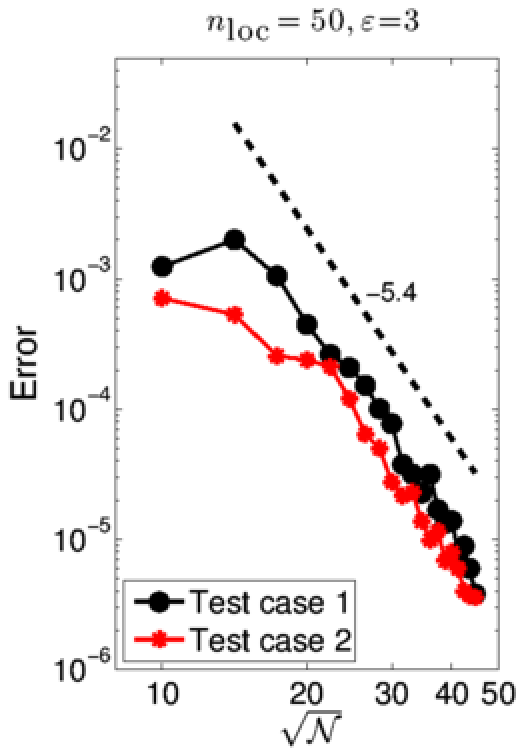}}
\center{(a)}
\end{minipage}
\begin{minipage}{0.24\textwidth}
\scalebox{0.75}{\includegraphics{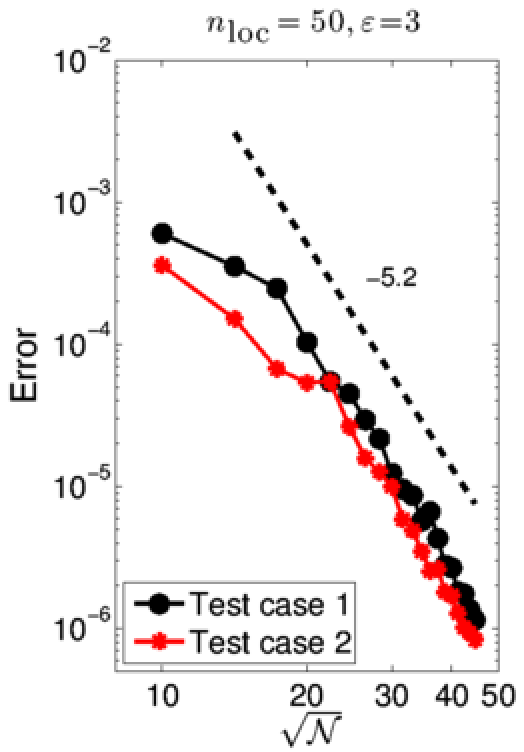}}
\center{(b)}
\end{minipage}
\begin{minipage}{0.24\textwidth}
\scalebox{0.75}{\includegraphics{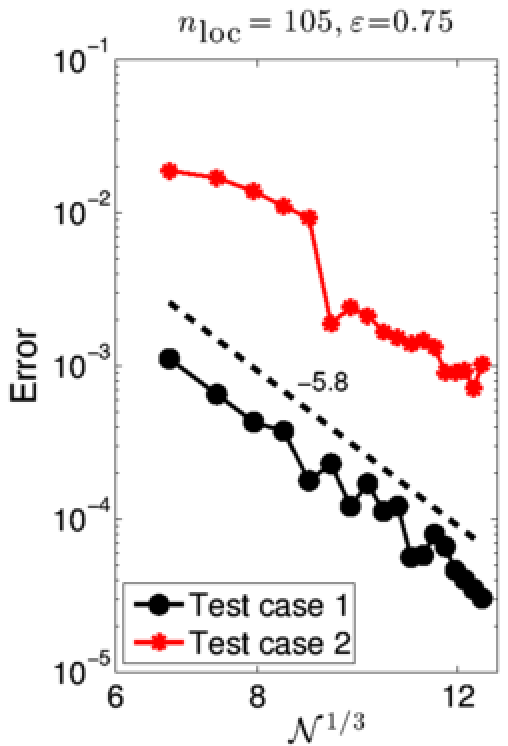}}
\center{(c)}
\end{minipage}
\begin{minipage}{0.24\textwidth}
\scalebox{0.75}{\includegraphics{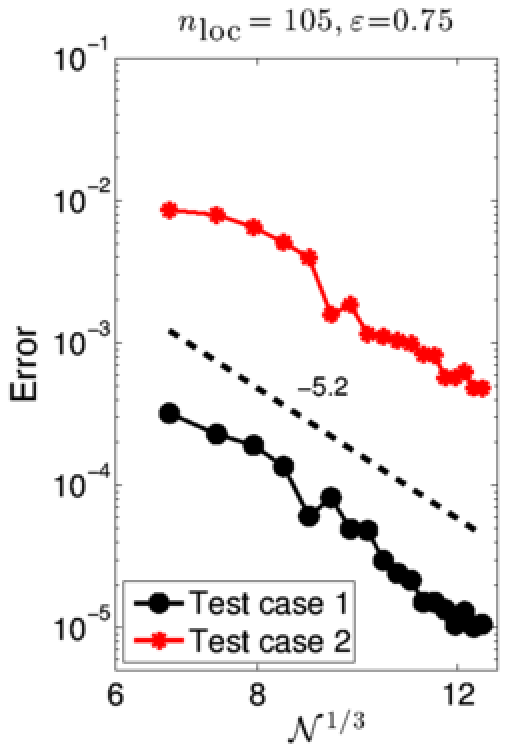}}
\center{(d)}
\end{minipage}
\end{minipage}
\caption{Convergence of the worst case error $\max_{\mu \in \Xi} \left\| u(\mu) - u^\calN(\mu)\right\|_{\ell^2}$ for 
equations \eqref{eq:awave2d} - \eqref{eq:diff3d} (left to right) as $\calN$ increases and $n_\text{loc}$ is fixed at $50$ for 2D and $105$ for 3D.}
\label{fig:algcov_awdiff}
\end{figure}
\begin{figure}[htb]
\begin{minipage}{\textwidth}
\begin{minipage}{0.24\textwidth}
\scalebox{0.75}{\includegraphics{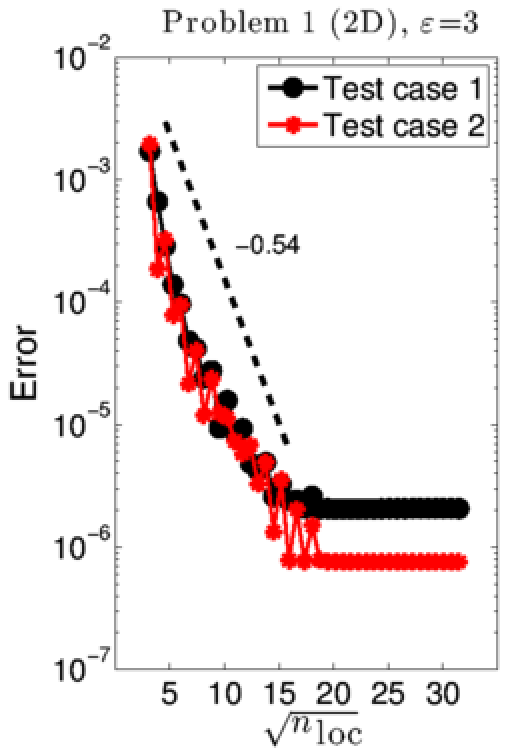}}
\center{(a)}
\end{minipage}
\begin{minipage}{0.24\textwidth}
\scalebox{0.75}{\includegraphics{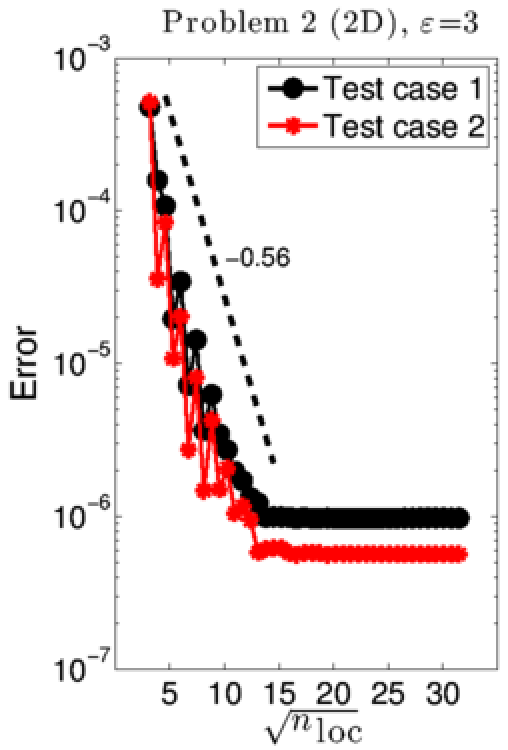}}
\center{(b)}
\end{minipage}
\begin{minipage}{0.24\textwidth}
\scalebox{0.75}{\includegraphics{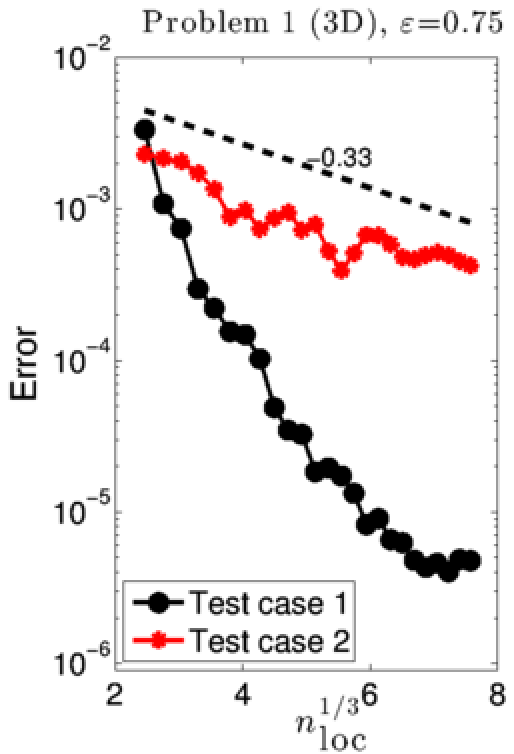}}
\center{(c)}
\end{minipage}
\begin{minipage}{0.24\textwidth}
\scalebox{0.75}{\includegraphics{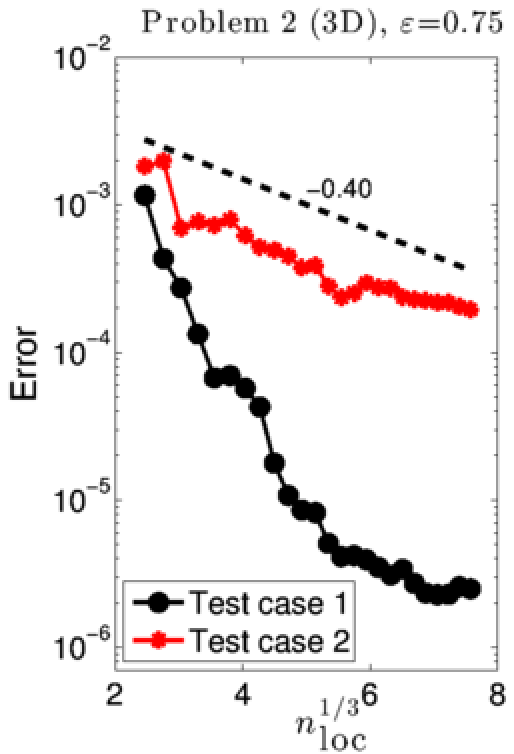}}
\center{(d)}
\end{minipage}
\end{minipage}
\caption{Convergence of the worst case error $\max_{\mu \in \Xi} \left\| u(\mu) - u^\calN(\mu)\right\|_{\ell^2}$ for 
equations \eqref{eq:awave2d} - \eqref{eq:diff3d} (left to right) as $n_\text{loc}$ increases and $\calN$ is fixed at $1000$ 
for 2D and $2046$ for 3D.}
\label{fig:geocov_awdiff}
\end{figure}

\section{The reduced radial basis function method}
\label{sec:r2bfm}

The novel contribution of this paper is the Reduced Radial Basis Function Method (\rrbm{}) that we describe in this section. 
The \rrbm{} algorithm uses the local RBF-FD method described in Section \ref{sec:rbf-fd} as the truth solver for a parameterized PDE of the general form \eqref{eq:pde}. We mainly consider irregular geometries, where this truth solver is advantageous compared to other solution methods. The tests run in Section \ref{sec:poissontests} show that this truth solver is highly accurate, featuring $h$-adaptivity in the parameter $\calN$, and $p$-adaptivity in the parameter $n_\text{loc}$. 
The \rrbm{} algorithm then uses the LSRCM  \cite{ChenGottlieb} to define the offline-online decomposition, defining the low-rank approximation space and the reduced-order operators.

\subsection{The \rrbm{} algorithm}

In this section we outline the greedy algorithm for the (least squares) Reduced Radial Basis Function Method. Our truth approximation is given by the local RBF method outlined in Section \ref{sec:rbf}. The LSRCM method from Section \ref{sec:rbm} is naturally applicable to this solver because the local RBF method is a collocation method. However, we are left to specify the error estimate $\Delta_n$ given by \eqref{eq:deltan-definition}. To do this, we adapt the Chebyshev pseudospectral arguments from \cite{ChenGottlieb} to our local RBF case. 
In our RBF setting, we have a natural specification for the norm: the native space norm $\left\|\cdot\right\|_{\mathcal{H}}$ defined through $\bm{\Phi} = \bm{S} \bm{S}^T$.  To state our result, we define 
\begin{align*}
  \alpha_{UB}^S = \max_{\mathbf{w} \in \R^{\calN}} \frac{ \mathbf{w}^T \, \mathbf{S}^{-T}\, \mathbf{S}^{-1}\,  \mathbf{w} }{ \mathbf{w}^T \mathbf{w} },
\end{align*}
which relates the native space norm of a function to the standard Euclidian norm $\lVert \cdot \rVert$ of the corresponding vector by
\begin{equation}
  \lVert \bm{v} \rVert_{\mathcal{H}}^2 \leq \alpha_{UB}^S \lVert \bm{v} \rVert^2,
\label{eq:NnormToEnorm}
\end{equation}
where $\bm{v}$ is a vector of collocation evaluations of the truth approximation PDE solution. In this context, we can define two types of error estimators, one that works on residuals measured in the native space norm, and the second that is measured in the Euclidean norm:
\begin{subequations}\label{eq:error-estimator}
  \begin{align}\label{eq:error-estimator-native}
    \Delta_n^1(\mu) &\triangleq \frac{\sqrt{\alpha^S_{UB}} \lVert{\bm{S}^{-1}(\boldf^\calN - \mathbb{L}_\calN (\mu)u_\mu^{(n)})}\rVert}{{\sqrt{\beta_{LB}^S(\mu)}}}, \\\label{eq:error-estimator-euclidean}
  \Delta_n^2(\mu) &\triangleq \frac{\sqrt{\alpha^S_{UB}} \lVert{(\boldf^\calN - \mathbb{L}_\calN (\mu)u_\mu^{(n)})}\rVert}{{\sqrt{\beta_{LB}(\mu)}}} .
\end{align}
\end{subequations}
The $\beta$ factors that translate these residuals into (native-space-norm or Euclidean-norm) errors are given by
\begin{subequations}\label{eq:beta}
  \begin{align}\label{eq:beta-native}
    \beta_{LB}^S(\mu) &\triangleq \min_{\mathbf{w} \in \R^{\calN}} \frac{ \mathbf{w}^T \mathbb{L}_{\calN}^T(\mu)\, \mathbf{S}^{-T}\, \mathbf{S}^{-1}\, \mathbb{L}_{\calN}(\mu) \mathbf{w} }{ \mathbf{w}^T \mathbf{w} }, \\\label{eq:beta-euclidean}
    \beta_{LB}(\mu) &\triangleq \min_{\mathbf{w} \in \R^{\calN}} \frac{ \mathbf{w}^T \mathbb{L}_{\calN}^T(\mu)\, \mathbb{L}_{\calN}(\mu) \mathbf{w} }{ \mathbf{w}^T \mathbf{w} }.
  \end{align}
\end{subequations}
The estimators $\Delta_n^i$ for $i=1,2$ are computable, and they form rigorous error estimators in the native space.
\begin{thm}
  For any $\mu$, let $u_\mu^\calN(\mu)$ be the truth approximation solving \eqref{eq:discpde} and $u_\mu^{(n)}(\mu)$ be the reduced basis solution \eqref{eq:rbm-surrogate} solving \eqref{eq:rbm-approximation}.
Then we have $\lVert u_\mu^\calN - u_\mu^{(n)}\rVert_{\mathcal{H}} \le \Delta_n^i(\mu)$ for $i = 1, 2$.
\end{thm}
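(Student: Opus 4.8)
The plan is to reduce the entire statement to one residual identity together with two Rayleigh-quotient estimates, handling both estimators by the same mechanism. Write $\mathbf{e}$ for the vector of collocation values of the error $u_\mu^\calN - u_\mu^{(n)}$, and let $\mathbf{r} = \boldf^\calN - \mathbb{L}_\calN(\mu)\, u_\mu^{(n)}$ be the common residual vector appearing in both $\Delta_n^1$ and $\Delta_n^2$. First I would record the key identity: since the truth approximation satisfies \eqref{eq:discpde} exactly, we have $\mathbb{L}_\calN(\mu)\, u_\mu^\calN = \boldf^\calN$, hence
\begin{align*}
  \mathbf{r} = \mathbb{L}_\calN(\mu)\, u_\mu^\calN - \mathbb{L}_\calN(\mu)\, u_\mu^{(n)} = \mathbb{L}_\calN(\mu)\, \mathbf{e}.
\end{align*}
It is worth noting that this step uses only that $u_\mu^\calN$ solves the discrete problem exactly; it does not invoke the least-squares optimality of the coefficients $\mathbf{c}(\mu)$ in \eqref{eq:rbm-approximation}. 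Consequently the bound is valid for \emph{any} candidate from the reduced space, and the minimization only serves to make the right-hand side small.

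The second ingredient turns the residual into a bound on the Euclidean norm $\lVert \mathbf{e}\rVert$. For $\Delta_n^2$, substituting $\mathbf{w} = \mathbf{e}$ into the definition \eqref{eq:beta-euclidean} of $\beta_{LB}(\mu)$ as the minimum Rayleigh quotient of $\mathbb{L}_\calN^T(\mu)\mathbb{L}_\calN(\mu)$ gives
\begin{align*}
  \lVert \mathbf{r}\rVert^2 = \mathbf{e}^T \mathbb{L}_\calN^T(\mu)\mathbb{L}_\calN(\mu)\, \mathbf{e} \ge \beta_{LB}(\mu)\, \lVert \mathbf{e}\rVert^2,
\end{align*}
so $\lVert \mathbf{e}\rVert \le \lVert \mathbf{r}\rVert/\sqrt{\beta_{LB}(\mu)}$. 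For $\Delta_n^1$ the identical argument with \eqref{eq:beta-native} and $\mathbf{w}=\mathbf{e}$ yields $\lVert \bm{S}^{-1}\mathbf{r}\rVert^2 \ge \beta_{LB}^S(\mu)\lVert \mathbf{e}\rVert^2$, that is, $\lVert \mathbf{e}\rVert \le \lVert \bm{S}^{-1}\mathbf{r}\rVert/\sqrt{\beta_{LB}^S(\mu)}$.

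Finally I would pass from the Euclidean norm to the native-space norm via \eqref{eq:NnormToEnorm}, namely $\lVert \mathbf{e}\rVert_{\mathcal{H}} \le \sqrt{\alpha_{UB}^S}\, \lVert \mathbf{e}\rVert$, which is simply the statement that $\alpha_{UB}^S$ is the largest eigenvalue of $\bm{S}^{-T}\bm{S}^{-1}$ together with $\lVert \cdot\rVert_{\mathcal{H}}^2 = (\cdot)^T\bm{S}^{-T}\bm{S}^{-1}(\cdot)$ from \eqref{eq:discrete-h-norm}. Chaining this with the two displays above produces
\begin{align*}
  \lVert \mathbf{e}\rVert_{\mathcal{H}} \le \sqrt{\alpha_{UB}^S}\, \frac{\lVert \bm{S}^{-1}\mathbf{r}\rVert}{\sqrt{\beta_{LB}^S(\mu)}} = \Delta_n^1(\mu), \qquad \lVert \mathbf{e}\rVert_{\mathcal{H}} \le \sqrt{\alpha_{UB}^S}\, \frac{\lVert \mathbf{r}\rVert}{\sqrt{\beta_{LB}(\mu)}} = \Delta_n^2(\mu),
\end{align*}
which are exactly the two claimed inequalities.

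There is no deep obstacle here; the result is the RBF analogue of the standard RBM a posteriori bound. The points that require care are getting the residual identity right, so that $\mathbf{r} = \mathbb{L}_\calN(\mu)\mathbf{e}$ rather than an expression in the individual snapshots, and matching each norm conversion (native-space versus Euclidean) to the correct $\beta$ factor. One should also check that $\beta_{LB}(\mu)$ and $\beta_{LB}^S(\mu)$ are strictly positive so the bounds are finite: this holds because $\mathbb{L}_\calN(\mu)$ is invertible (the truth solution is unique) and $\bm{S}$ is invertible by the positive-definiteness \eqref{eq:positive-definiteness}. I would add the remark that replacing the exact minima $\beta_{LB}, \beta_{LB}^S$ by any strictly positive \emph{lower} bounds only enlarges the denominators' reciprocals and hence preserves both inequalities, which is what makes the estimators practical.
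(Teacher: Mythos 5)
Your proof is correct and takes essentially the same route as the paper's: the residual identity $\mathbf{r} = \mathbb{L}_\calN(\mu)\mathbf{e}$ from exactness of the truth solve, the two Rayleigh-quotient bounds (which the paper compresses into the phrase ``basic properties of eigenvalues''), and the final passage to the native-space norm via \eqref{eq:NnormToEnorm}. Your supplementary observations---that least-squares optimality of $\mathbf{c}(\mu)$ is never used, and that replacing the exact minima by positive lower bounds preserves the inequalities---are correct but not needed for the statement itself.
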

\begin{proof}
We have the following error equations on the $\calN$-dependent fine domain RBF grid thanks to the
equation satisfied by the truth approximation \eqref{eq:discpde}:
\[
\mathbb{L}_\calN (\mu) \left(u_\mu^\calN - u_\mu^{(n)}\right) = f - \mathbb{L}_\calN (\mu) u_\mu^{(n)},
\quad \bm{S}^{-1}\mathbb{L}_\calN (\mu) \left(u_\mu^\calN - u_\mu^{(n)}\right) = \bm{S}^{-1}(f - \mathbb{L}_\calN (\mu) u_\mu^{(n)}).
\]
Taking the the standard Euclidian norm and using basic properties of eigenvalues gives
\[ 
\lVert u_\mu^\calN - u_\mu^{(n)}\rVert \le \frac{\lVert f - \mathbb{L}_\calN
(\mu) u_\mu^{(n)}\rVert}{{\sqrt{\beta_{LB}(\mu)}}}, 
\quad \lVert u_\mu^\calN - u_\mu^{(n)}\rVert \le \frac{\lVert \bm{S}^{-1}(f - \mathbb{L}_\calN
(\mu) u_\mu^{(n)})\rVert}{{\sqrt{\beta_{LB}^S(\mu)}}}
\]
respectively. We then apply the inequality \eqref{eq:NnormToEnorm} to finish the proof.
\end{proof}

These error bounds can then be used to perform the offline LSRCM computations in Section \ref{sec:rbm-offline-snapshots}: determination of the parameter values $\mu^1, \ldots, \mu^N$ and the subsequent snapshots $u_{\mu^1}, \ldots, u_{\mu^N}$. In our experiments below, we have used the second estimate $\Delta_n^2$. The remaining LSRCM steps in Sections \ref{sec:rbm-offline-affine-stuff} and \ref{sec:rbm-online} are as described in those sections. We outline the entire offline algorithm in Algorithm \ref{alg:LSgreedy}.

\begin{remark}
  The estimates above work in the ``global" native space defined by the shape function $\phi$ and width parameter $\varepsilon$. However, our RBF approximation is a \textit{local} approximation and so it is perhaps more appropriate to use a ``local" native space norm in order to compute these estimates. However, our tests have shown that this distinction does not affect the result much in practice, although it does make a difference for very small local stencil sizes (e.g., $n_{\text{loc}}^{1/d} \leq 3$). In order to keep the method simple, we have therefore used the global native space norm as presented above for the RCM error estimate.
\end{remark}

\begin{algorithm}[htp]
  \caption{Least Squares R$^2$BFM{: Offline Procedure}}\label{alg:LSgreedy}
  \begin{algorithmic}
\STATE {\bf 1.} Discretize the parameter domain $\calD$ by $\Xi$, and denote the center of $\calD$ by $\mu_c$.
\medskip
\STATE {\bf 2.} Randomly select $\mu_1$ and solve the RBF problem $\mathbb{L}_\calN (\mu_1)\, u^\calN_{\mu_1}
(\x) = f(\x; \mu_1)$ for $\x \in C^\calN$.
\medskip
\STATE {\bf 3.} For $n = 2, \dots, N$ do
\begin{itemize}
\item [{\bf 1).}] Form $\mathbb{A}_{n-1} = \left(\mathbb{L}_\calN\, u_{\mu_1}^\calN, \mathbb{L}_\calN\, u_{\mu_2}^\calN, \, \dots, \,
\mathbb{L}_\calN\, u_{\mu_{n-1}}^\calN\right)$.
\item [{\bf 2).}] For all $\mu \in \Xi$, solve
$\mathbb{A}_{n-1}^T\,\mathbb{A}_{n-1} \,\vec{c} = \mathbb{A}_{n-1}^T\,\boldf^\calN$ to obtain $u^{(n-1)}_\mu
= \sum_{j = 1}^{n - 1} c_j u^\calN_{\mu_j}$.
\item [{\bf 3).}] For all $\mu \in \Xi$, calculate $\Delta_{n-1}^2(\mu)$. Then set $\mu^n = argmax_{\mu}\,\,\Delta_{n-1}(\mu)$.
\item [{\bf 4).}] Solve the RBF problem $\mathbb{L}_\calN (\mu_n)\, u^\calN_{\mu_n}(\x) = f(\x; \mu_n)$ for $\x \in C^\calN$.
\item [{\bf 5).}]Apply a modified Gram-Schmidt transformation, with inner product defined by $(u,v) \equiv \left(\mathbb{L}_\calN (\mu_c)u, \mathbb{L}_\calN (\mu_c) v\right)_{L^2(\Omega)}$, on the basis $\left\{u^\calN_{\mu_1}, u^\calN_{\mu_2}, \dots, u^\calN_{\mu_n}\right\}$ to obtain a more stable basis $\left\{\xi_1^\calN, \xi_2^\calN, \dots, \xi_n^\calN\right\}$ to use as terms in the expansion \eqref{eq:rbm-surrogate} for the least squares reduced collocation method.
\end{itemize}
  \end{algorithmic}
\end{algorithm}


\subsection{Numerical Results}\label{sec:r2bfm_num}

We test our reduced solver on the four equations \eqref{eq:awave2d} - \eqref{eq:diff3d} listed in Section \ref{sec:poissontests}. 
In all cases, we employ the error estimator $\Delta^2_n$ from \eqref{eq:error-estimator-euclidean} to select snapshots and construct the reduced approximation space. 

\subsubsection{\rrbm{}: Two dimensional cases}

In this section, we test our reduced solver R$^2$BFM, taking $f = -10\sin(8x(y-1))$ for \eqref{eq:awave2d} and $f = e^{4xy}$ for \eqref{eq:diff2d}, 
both with homogeneous boundary conditions.
We discretize the 2D domain $\Omega$ with $\calN=1000$ RBF nodes with stencil of size $n_{\text{loc}} = 50$.  These $\calN$ RBF nodes (see Figure \ref{fig:pointsmatrix1}) are  selected by a greedy Cholesky algorithm out of $2984$ candidate points that are uniformly distributed on $\Omega$.  In the following numerical experiments we use inverse multiquadric RBFs with shape parameter $\varepsilon=3$. 

\begin{figure}[htb]
\includegraphics[width=0.45\textwidth]{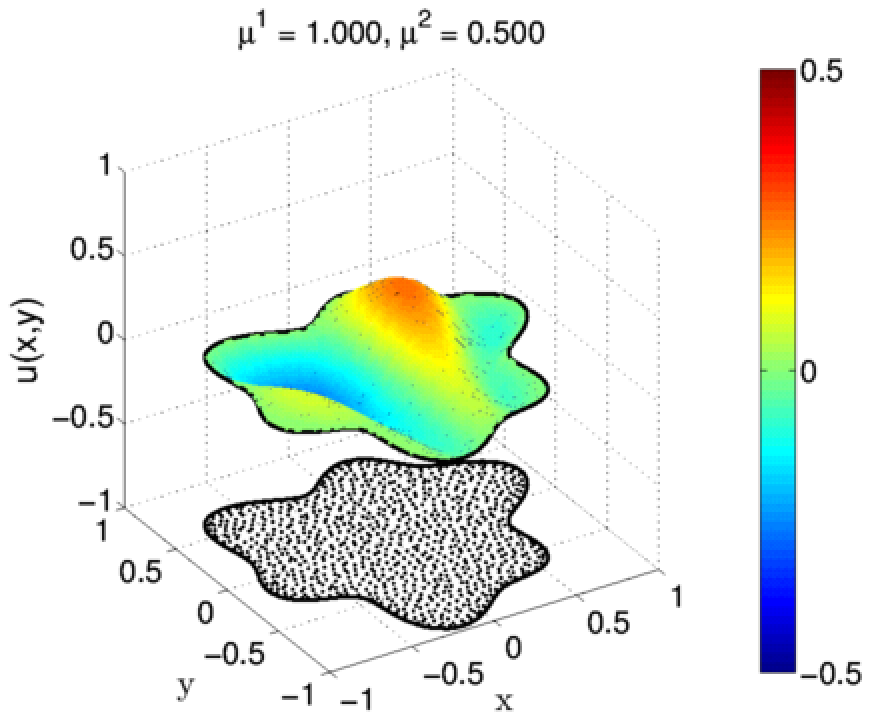}
\includegraphics[width=0.45\textwidth]{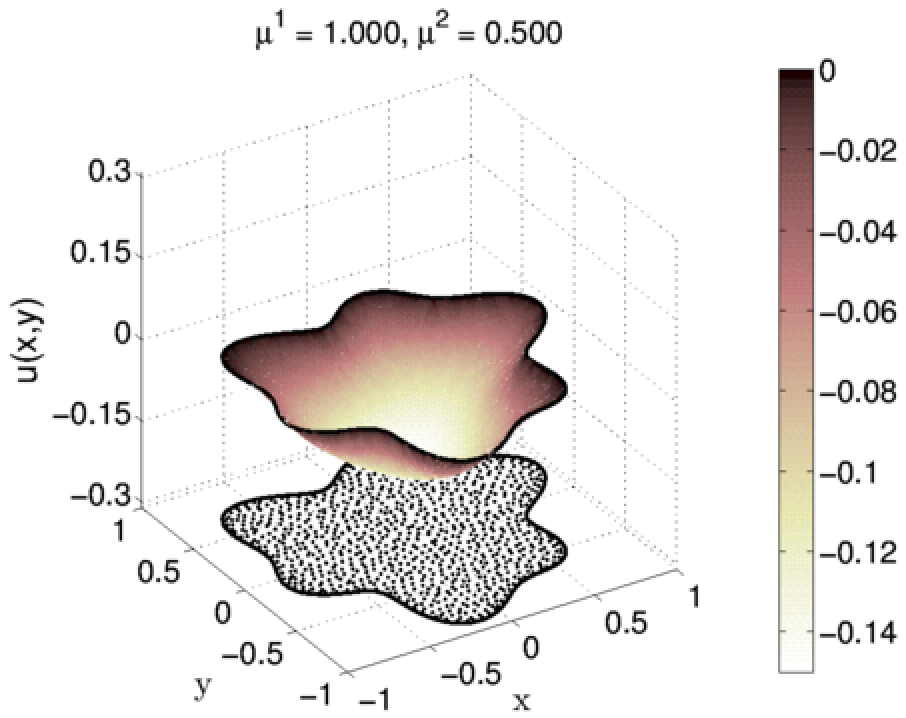}
\caption{Surface plots of the sample solutions for the 2D test problems \eqref{eq:awave2d} (left)  and \eqref{eq:diff2d} (right) at $\mu^1=1$ and $\mu^2=0.5$.
} \label{fig:solutions}
\end{figure} 
\begin{figure}[htb]
\includegraphics[width=0.5\textwidth]{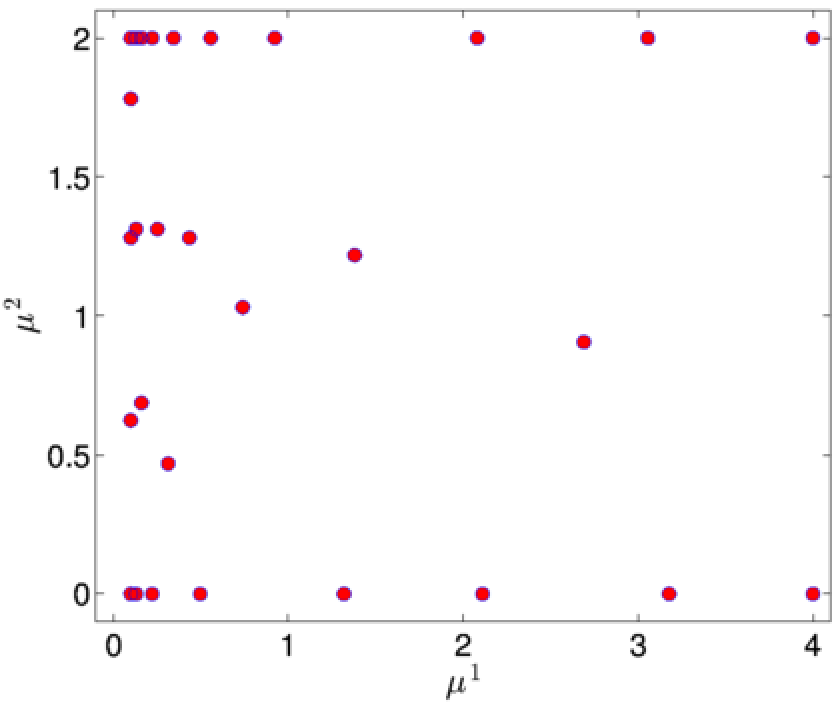}
\includegraphics[width=0.28\textwidth]{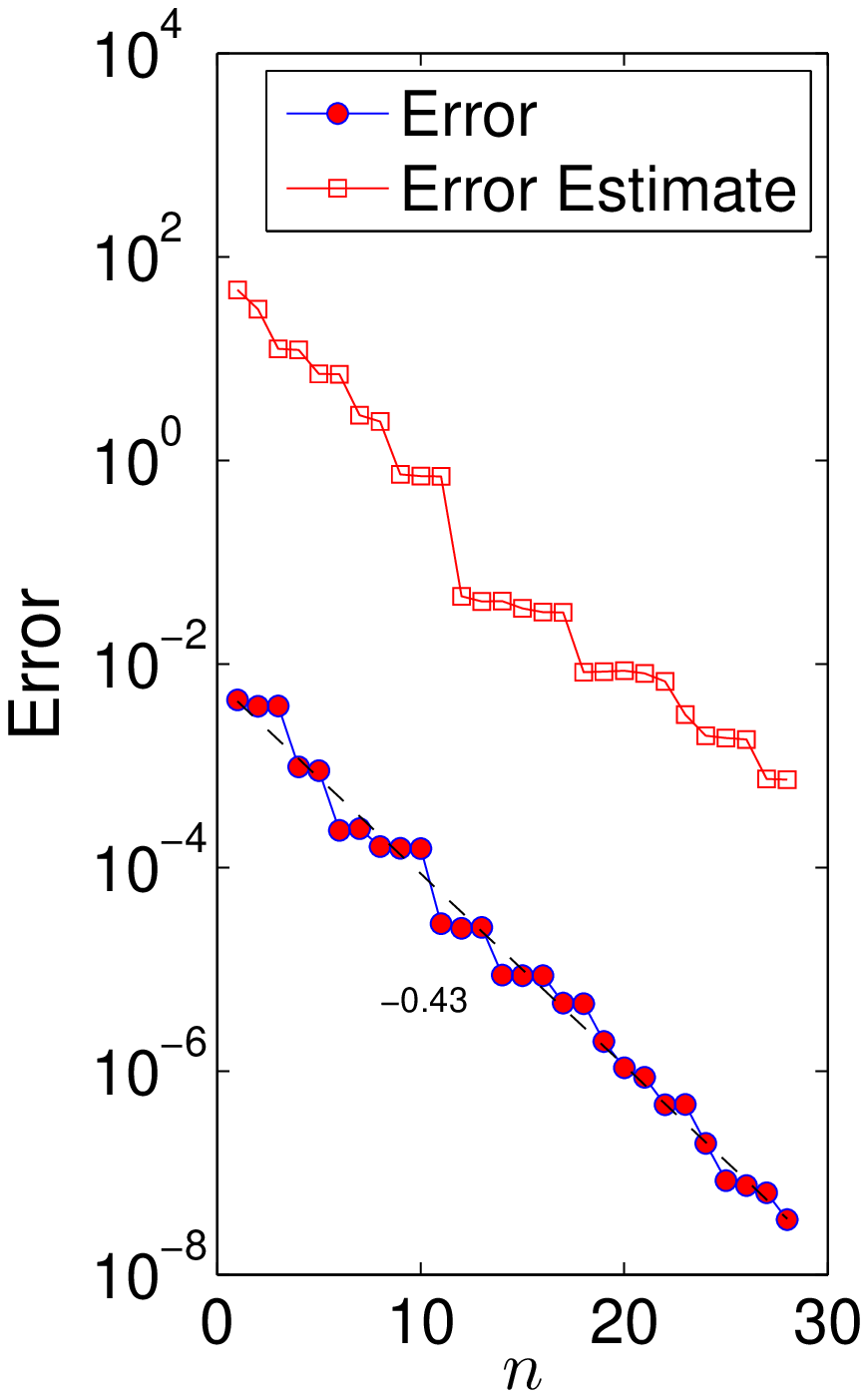}\\
\includegraphics[width=0.5\textwidth]{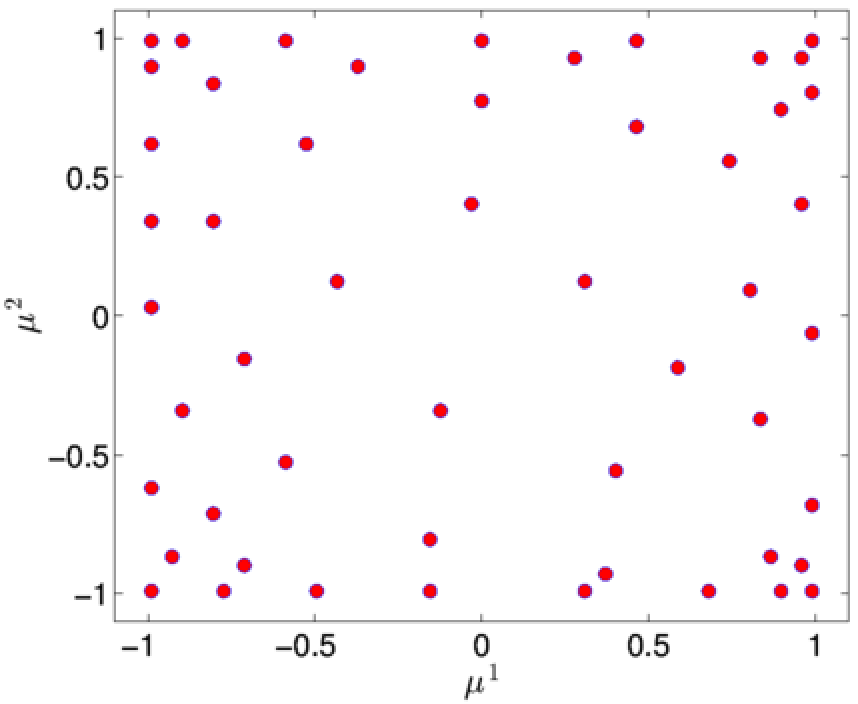}
\includegraphics[width=0.28\textwidth]{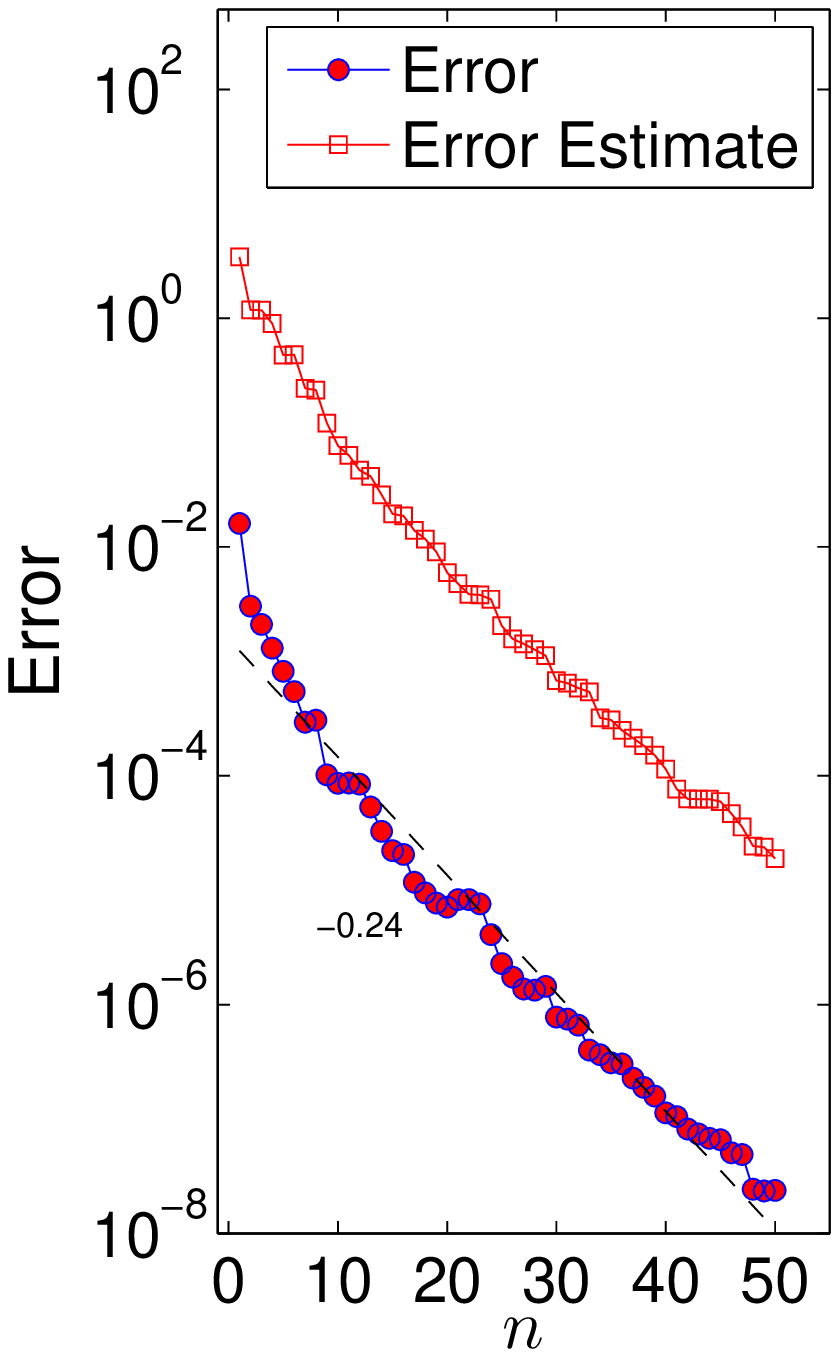}
\caption{Selected parameter values to build the reduced solution spaces (left), and the history of convergence of the RB approximations (right).  
Shown on top is for 2D problem \eqref{eq:awave2d}, and at the bottom is for \eqref{eq:diff2d}. 
}
\label{fig:RBsolutions}
\end{figure}

In Figure \ref{fig:solutions} we show the surface plots of the solutions to 
\eqref{eq:awave2d}  and \eqref{eq:diff2d}
at a particular parameter value $\mu^1=1$ and $\mu^2=0.5$. These images show the irregular domain shape and the complexity of the solution profile resulting from it. 
In Figure \ref{fig:RBsolutions} we see that the R$^2$BFM solution is converging exponentially to the RBF solution. Compared to the full RBF truth simulation with $\calN= 1000$ centers and local stencils of size $50$, the \rrbm{} algorithm achieves comparable accuracy with much less than $n = 50$ bases. We leave the details of the comparison for Section 
\ref{sec:comptime}.

\subsubsection{\rrbm{}: Three dimensional cases}
In this section we test our reduced solver R$^2$BFM on three-dimensional problems by taking $f = -10\sin(8x(y-1)z)$ for \eqref{eq:awave3d} and $f = e^{4xyz}$ for \eqref{eq:diff3d} with homogeneous Dirichlet boundary conditions. 
Truth approximation simulations were carried out using the inverse multiquadric RBF with $\varepsilon=0.75$, and local stencils of size $n_{\textnormal{loc}}=125$ on a mesh comprised of a total of $2046$ points from the Power Function method. Figure \ref{fig:aniswave3D} shows the solutions corresponding to $\mu^1=1$ and $\mu^2 = 0.5$.
\begin{figure}[htb]
\includegraphics[width=0.45\textwidth]{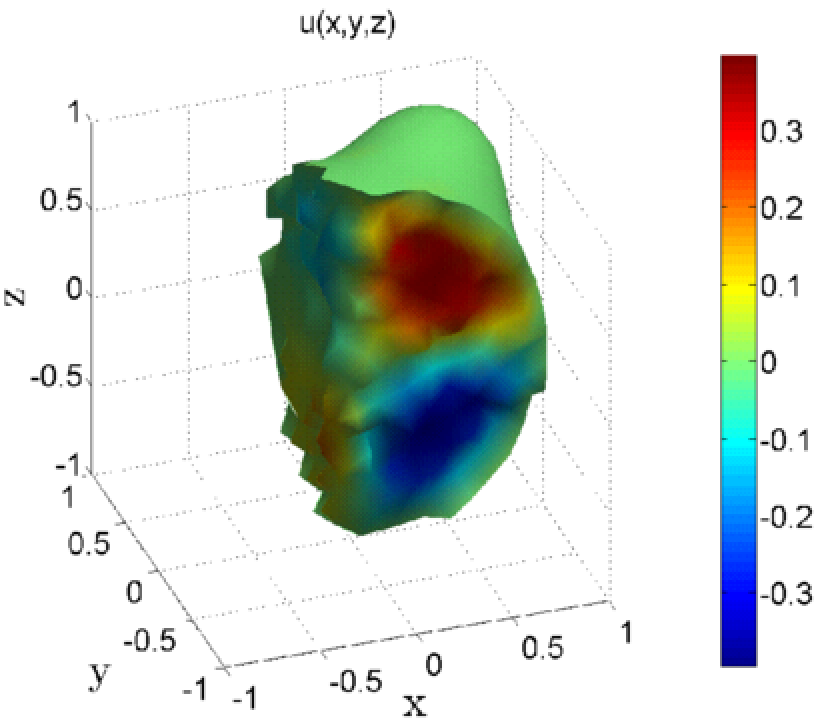}
\includegraphics[width=0.45\textwidth]{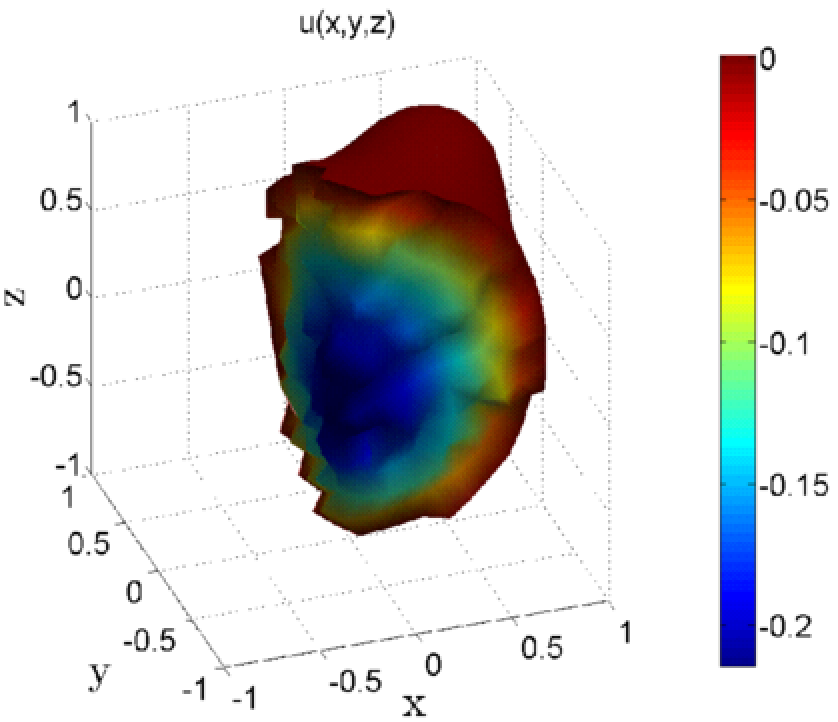}
\caption{Sample solutions at $\mu^1=1$ and $\mu^2=0.5$ for the 
3D problems \eqref{eq:awave3d} (left) and \eqref{eq:diff3d} (right).
}
\label{fig:aniswave3D}
\end{figure} 
Again, the reduced basis method converges exponentially (Figure \ref{fig:3DconvRB}) and provides accurate surrogate solutions 
with a very small number of basis functions. See Section 
\ref{sec:comptime} for the details of the comparison.
\begin{figure}[htb]
\includegraphics[width=0.45\textwidth]{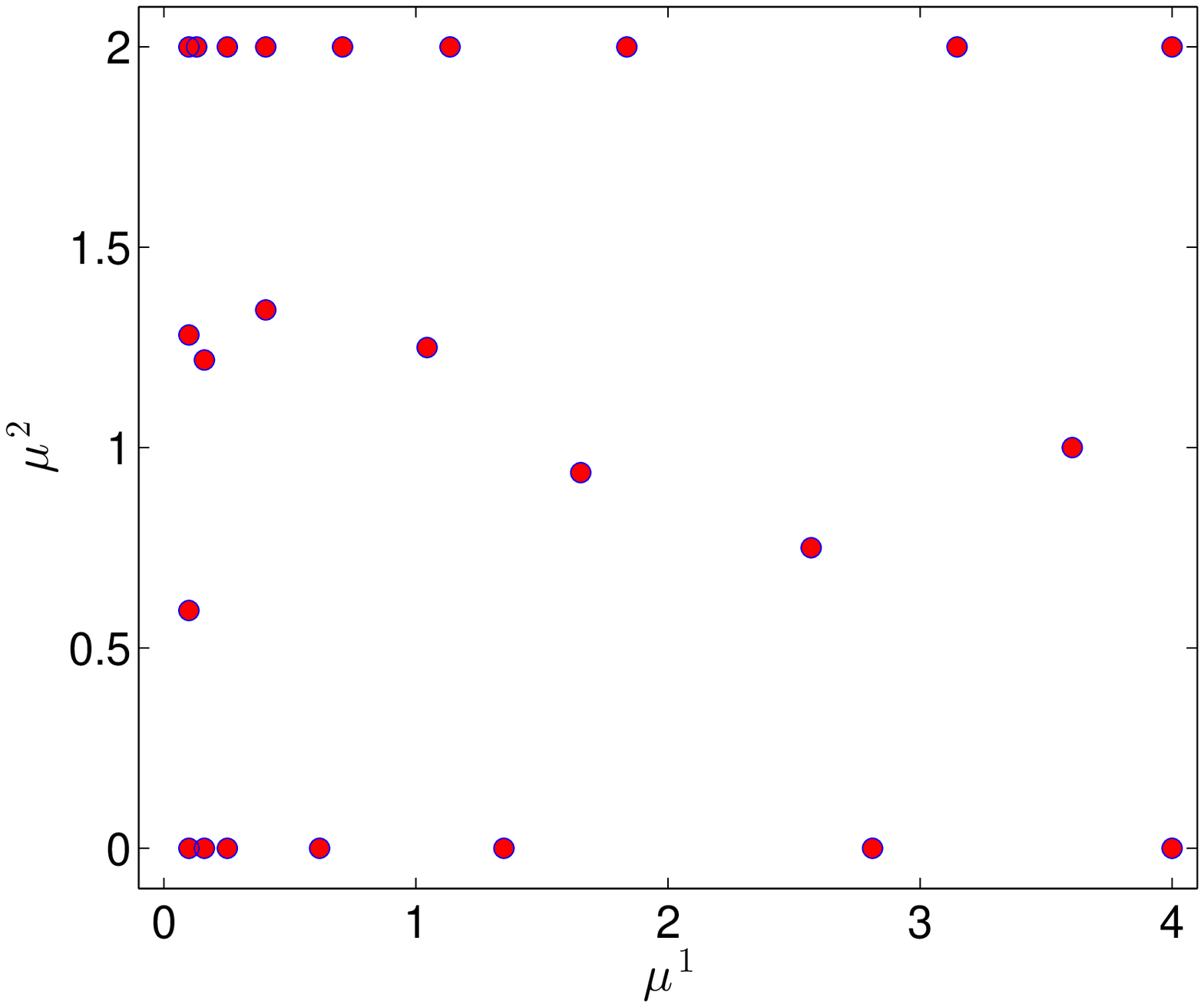}
\includegraphics[width=0.25\textwidth]{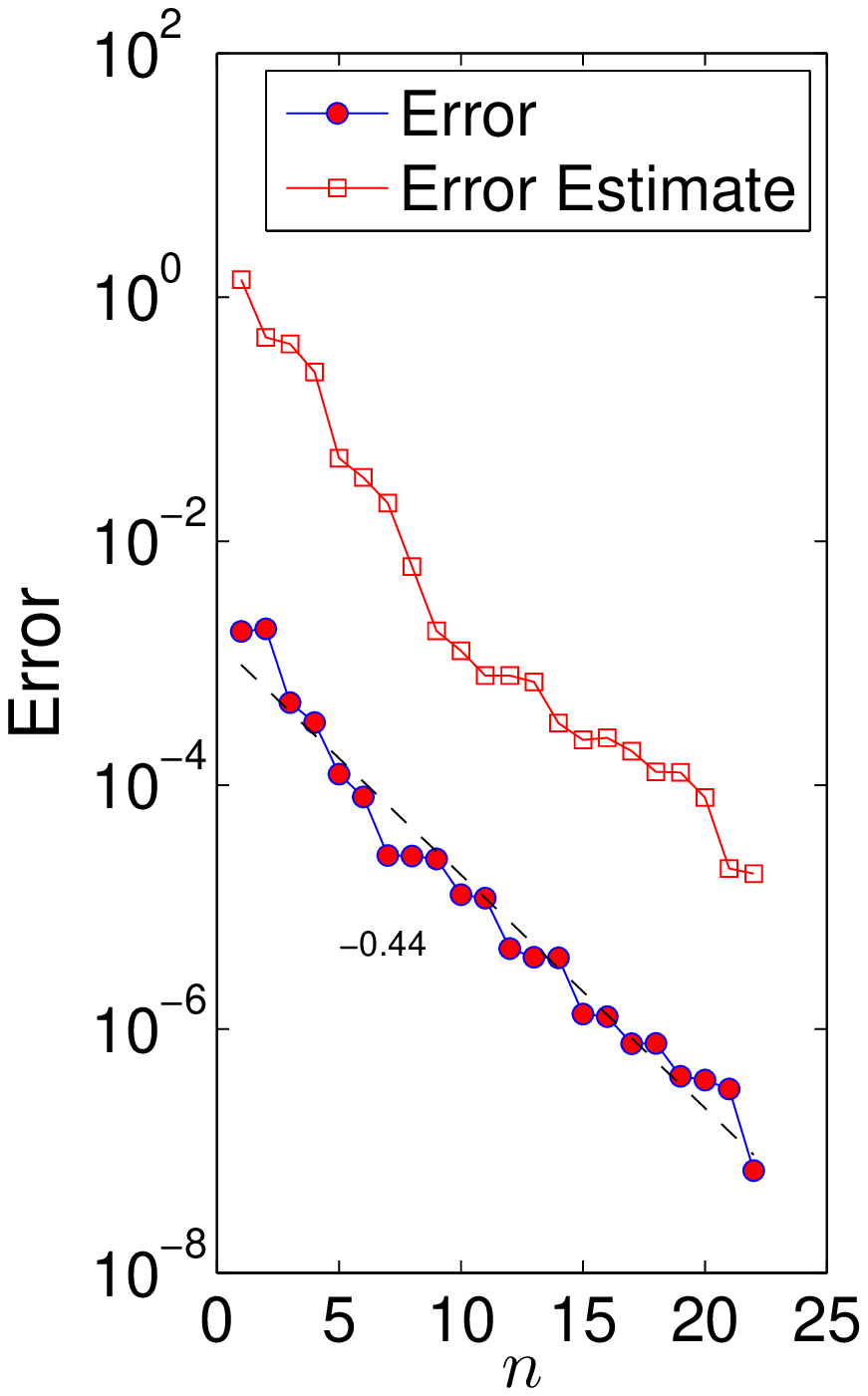} \\
\includegraphics[width=0.45\textwidth]{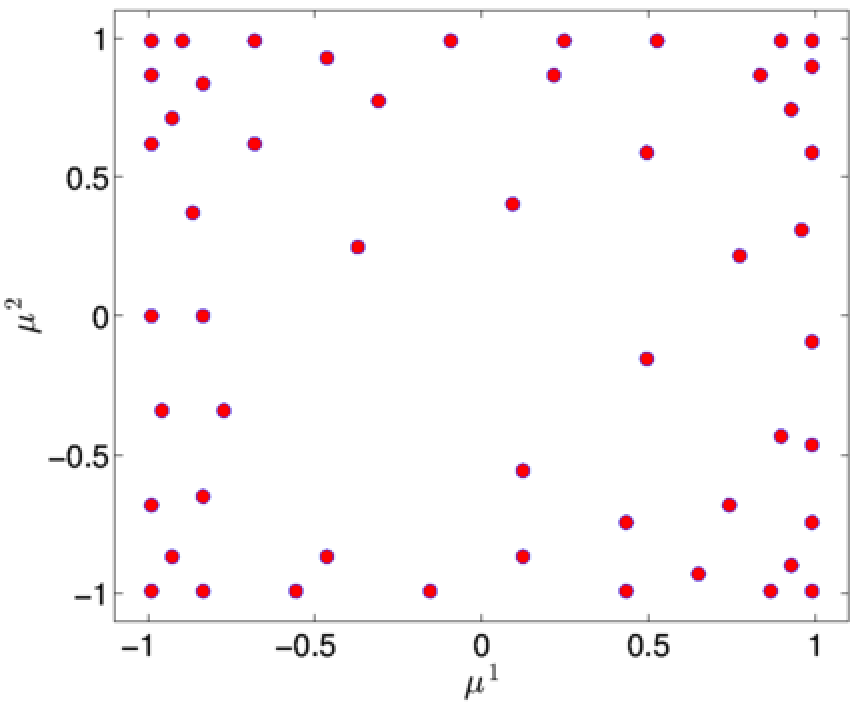}
\includegraphics[width=0.25\textwidth]{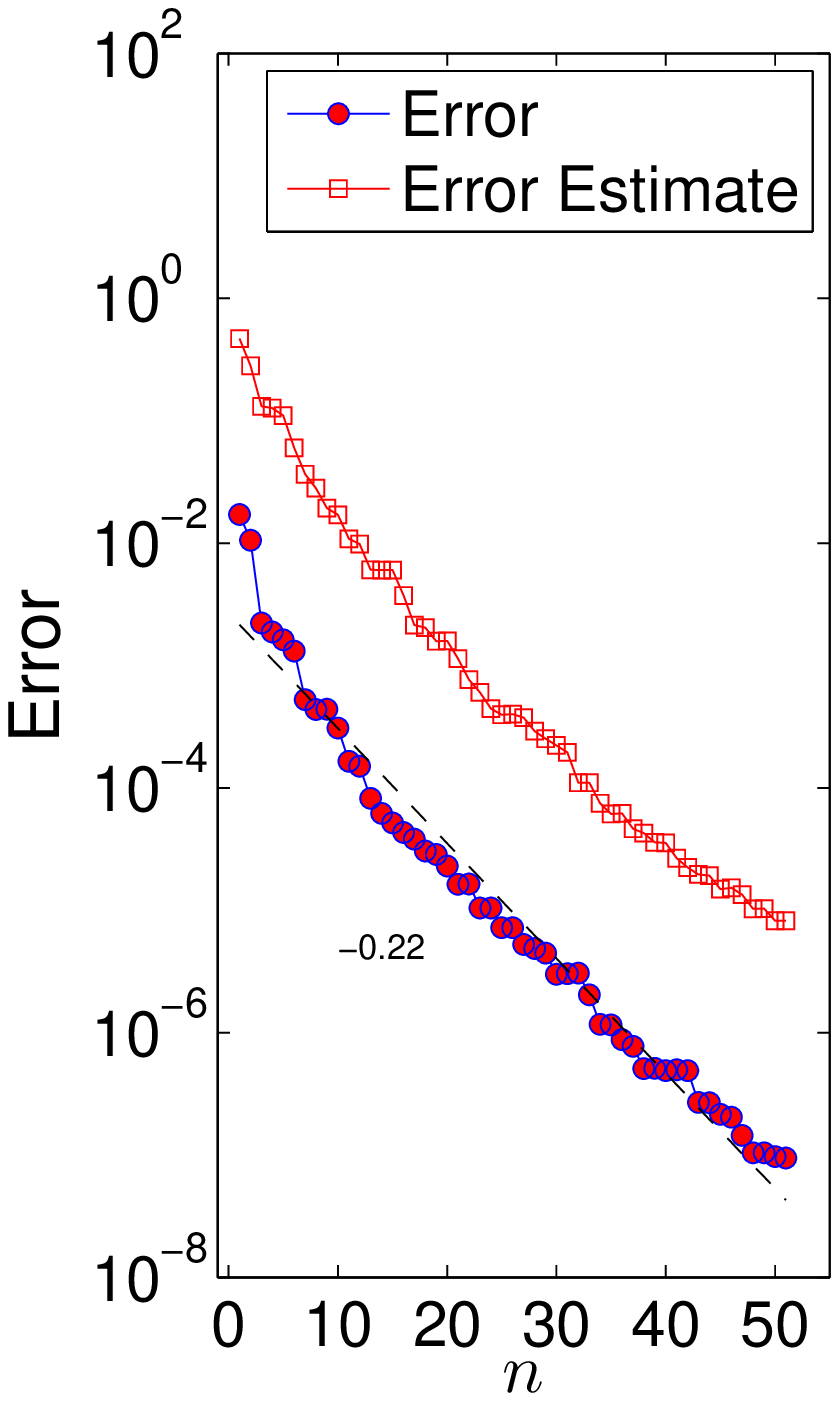} \\
\caption{Selected parameter values for the RB space generation (left), and the convergence of the R$^2$BFM solutions (right). Plotted on top is 
for the 3D problem \eqref{eq:awave3d} and at the bottom is for \eqref{eq:diff3d}. }
\label{fig:3DconvRB}
\end{figure} 

\subsubsection{\rrbm{} efficiency: Computational time}
\label{sec:comptime}
In this section, we report, in Table \ref{tab:comptime}, the computational time for the different stages of the method and the speedup of the \rrbm{}.  All computations are carried out using \textsc{MATLAB2013.b} on a Mac Pro workstation with 12 GB ECC memory and an eight core 2.8 GHz Intel Xeon processor. 
In Table \ref{tab:comptime}, $n$ is the number of reduced bases we use for the comparison between the reduced 
solver and the full solver (with a particular $\calN$ and $n_\text{loc}$). They are determined by having the accuracy of the two solvers roughly comparable (around $10^{-4}$). 
The former is located by referring to Figures \ref{fig:RBsolutions} and \ref{fig:3DconvRB}, and the later by Figures 
\ref{fig:algcov_awdiff} and \ref{fig:geocov_awdiff} with the specific values of $\calN$ and $n_\text{loc}$.

$\tau_{\textnormal{offline}}$ represents the offline computational time (in seconds) to compute $n$ basis snapshots along with associated reduced-order operators. 
We point out that $\tau_{\textnormal{offline}}$ does not include the time spent on the calculation of 
the stability constant \eqref{eq:beta}. In this paper, we calculate directly these constants which takes time 
that is comparable (2D) or more than (3D) $\tau_{\textnormal{offline}}$. 
However, we have tested locating a lower bound of the stability constant by the natural-norm successive constraint method \cite{HKCHP}. 
This algorithm cuts the time for the stability constant calculation by at least $75\%$ in all cases.

$\bar{\tau}_{\textnormal{ts}}$ is the average computational time to solve the PDE with the truth RBF-FD solver. 
$\bar{\tau}_{\textnormal{rb}}$ is that by using the 
\rrbm{} solver with $n$ basis functions. We observe that, in all cases, the R$^2$BFM provides speedup factors of more than $55$.
We note that the moderate speedup (compared to the usually-reported RBM speedups of $\mathcal{O}(100)$) is due to the specific implementation of our \textsc{MATLAB} code. In particular, the online assembling time is in the order of $Q_a n^2$ which should be negligible in comparison to the time devoted to solving for the reduced solution which is $O(n^3)$. 

Unfortunately, this is not the case in our implementation due to the use of the cell data structures and multi-dimentional arrays in \textsc{MATLAB}. In fact, if we exclude the assembling time in our calculation, we {\em recover} the usual speedup of 2 to 3 orders of magnitude for these type of problems. This speedup calculation (where online computational time does not include operator assembly time) is shown in Figure \ref{fig:comptime} for a large number of different $n$ and $\calN$.

\begin{table}[ht]
\begin{tabular}{|c|c|c|c|c|c|}
\hline
Problem & $n$  & $\tau_{\textnormal{offline}}$ (s)& $\bar{\tau}_{\textnormal{ts}}$ (s)  &$\bar{\tau}_{\textnormal{rb}}$ (s) & Speed up $\left( \frac{\bar{\tau}_{\textnormal{ts}}}{\bar{\tau}_{\textnormal{rb}}} \right)$ \\
\hline
1 (2D) & 12 & 190.28 & 0.0603576 & 0.00108305 & 55.7290 \\
\hline
2 (2D) & 12  & 111.65 & 0.0690138 & 0.00125392 & 55.0836 \\
\hline
1 (3D) &   6  & 145.89 & 0.1513660 & 0.00168018 & 90.0895 \\
\hline
2 (3D) & 12  & 221.38 & 0.1463470 & 0.00183657 & 79.6850 \\
\hline
\end{tabular}
\caption{Computational time and the corresponding speedup:.}
\label{tab:comptime}
\end{table}

\begin{figure}[htb]
\includegraphics[width=0.45\textwidth]{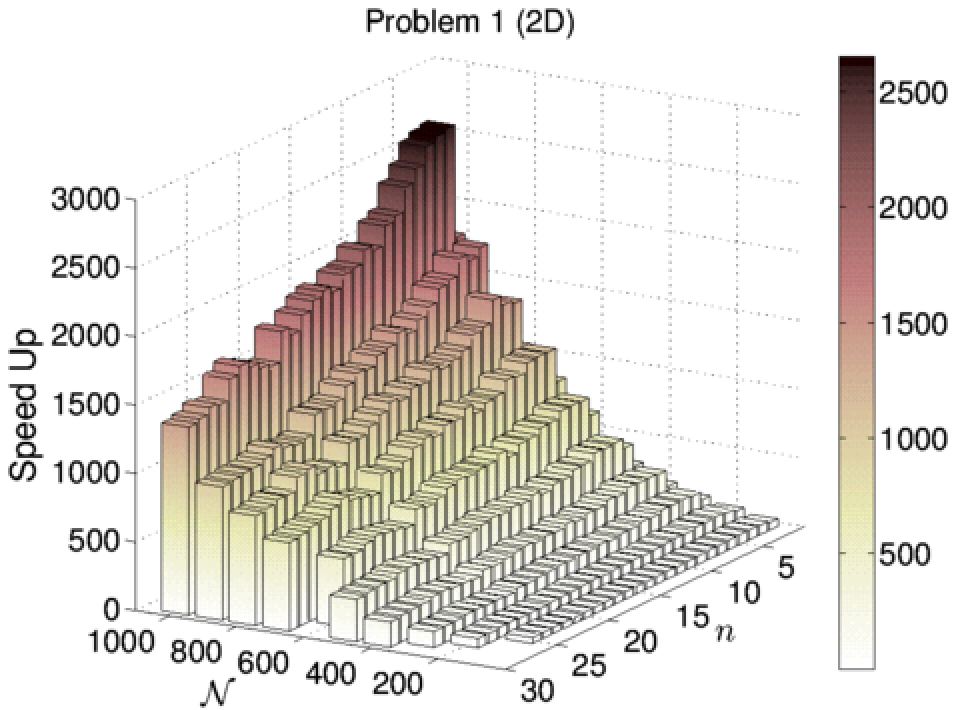}
\includegraphics[width=0.45\textwidth]{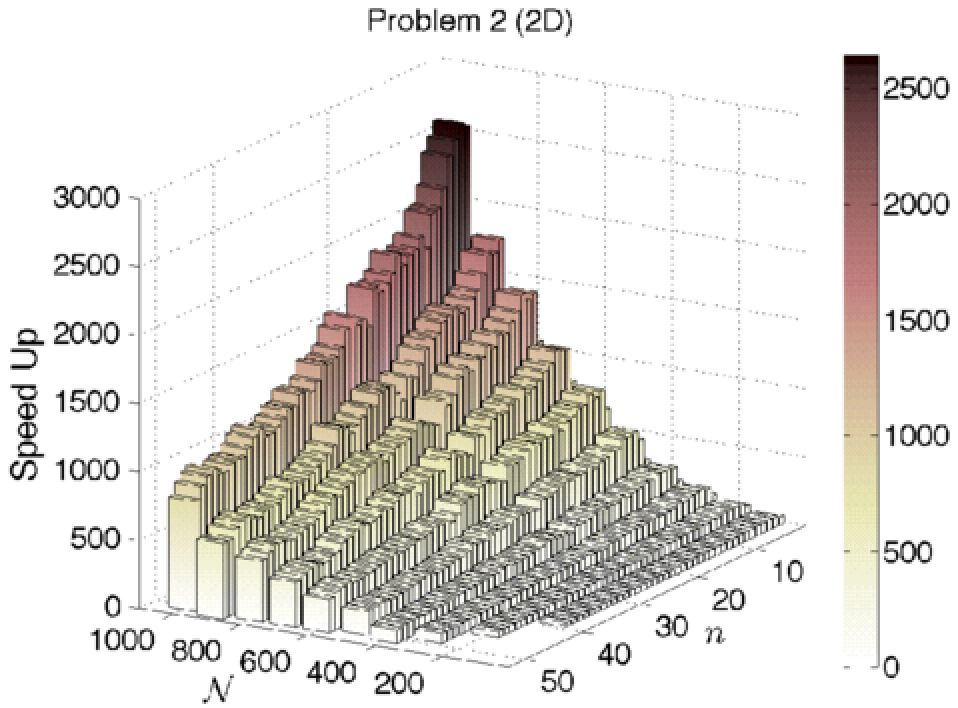} \\
\includegraphics[width=0.45\textwidth]{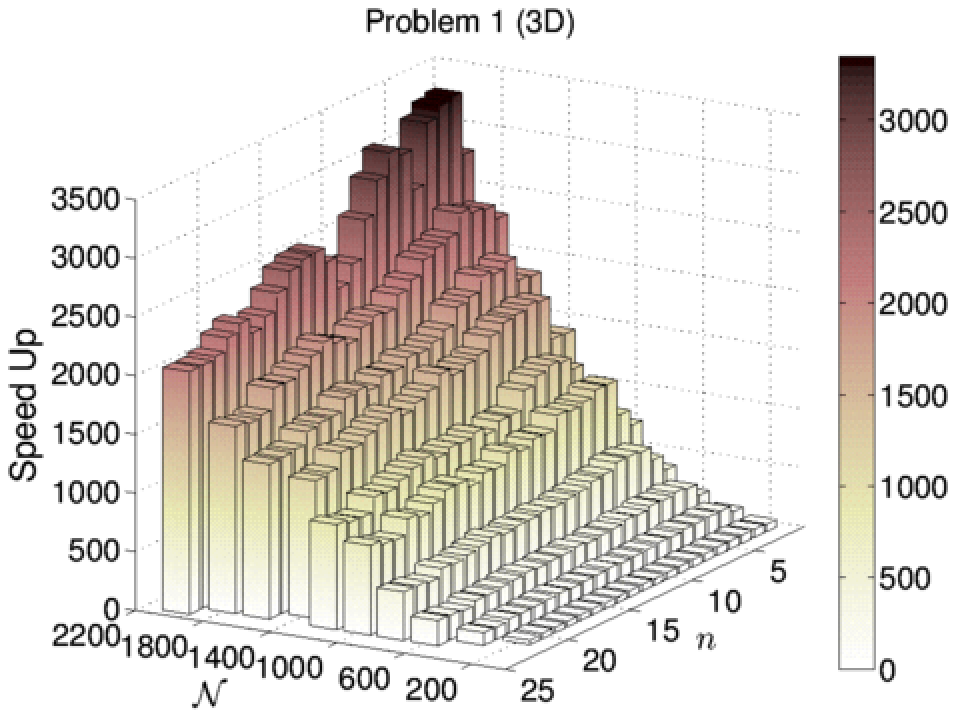}
\includegraphics[width=0.45\textwidth]{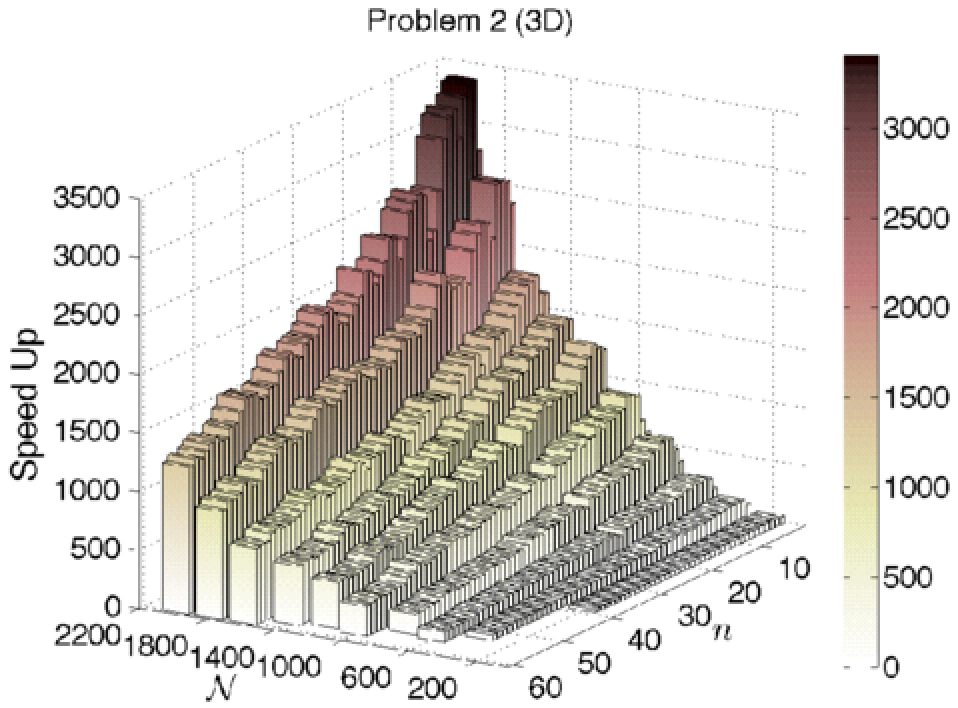} \\
\caption{Computational speedup excluding the online assembling time: shown on top are for the 2D problem \eqref{eq:awave2d} (left) and \eqref{eq:diff2d} (right), on the bottom are \eqref{eq:awave3d} (left) and \eqref{eq:diff3d} (right).}
\label{fig:comptime}
\end{figure}

\section{Conclusion}
\label{sec:conclude}

Partial differential equations that have parametric dependence are challenging problems in scientific computing. The reduced-basis method efficiently handles these problems, even in the collocation setting for pseudospectral approximations. However, when the problem has difficulty compounded by an irregular geometry, standard pseudospectral collocation methods (e.g. Chebyshev, Fourier) cannot be directly applied. 

We have addressed this problem by applying a local radial basis function approximation methods to this situation. 
Due to their ability to approximate on irregular geometries, RBF methods provide excellent candidates for a collocation approximation. In particular we have employed a finite-difference version of RBF methods that uses local stencils to form differential operator approximations. The result is an $h p$-adaptive-like method on irregular geometries with local, hence efficient, operators.

We use the RBF-FD method as the truth approximation in a reduced basis collocation method, resulting in the Reduced Radial Basis Function Method. We have shown via extensive tests that this \rrbm{} algorithm can efficiently solve parametric problems on irregular geometries, effectively combining the strengths of both RBM and RBF algorithms.

\bibliographystyle{amsplain}

\begin{thebibliography}{10}

\bibitem{Almroth_Stern_Brogan}
B.~O. Almroth, P.~Stern, and F.~A. Brogan, \emph{Automatic choice of global
  shape functions in structural analysis}, AIAA Journal \textbf{16} (1978),
  525--528.

\bibitem{Balmes}
E.~Balmes, \emph{Parametric families of reduced finite element models: Theory
  and applications}, Machanical Systems and Signal Processing \textbf{10}
  (1996), no.~4, 381--394.

\bibitem{Barrault_Nguyen_Maday_Patera}
M.~Barrault, N.~C. Nguyen, Y.~Maday, and A.~T. Patera, \emph{An ``empirical
  interpolation'' method: Application to efficient reduced-basis discretization
  of partial differential equations}, C. R. Acad. Sci. Paris, S\'erie I
  \textbf{339} (2004), 667--672.

\bibitem{Barrett_Reddien}
A.~Barrett and G.~Reddien, \emph{On the reduced basis method}, Z. Angew. Math.
  Mech. \textbf{75} (1995), no.~7, 543--549. \MR{MR1347913 (97a:65053)}

\bibitem{bayona2011}
V.~Bayona, M.~Moscoso, and M.~Kindelan, \emph{Optimal constant shape parameter
  for multiquadric based {RBF}-{FD} method}, J. Comput. Phys. \textbf{230}
  (2011), no.~19, 7384--7399. \MR{2822950 (2012h:65241)}

\bibitem{bayona2012}
\bysame, \emph{Gaussian {RBF}-{FD} weights and its corresponding local
  truncation errors}, Eng. Anal. Bound. Elem. \textbf{36} (2012), no.~9,
  1361--1369. \MR{2915889}

\bibitem{BinevCohenDahmenDevorePetrovaWojtaszczyk}
P.~Binev, A.~Cohen, W.~Dahmen, R.~Devore, G.~Petrova, and P.~Wojtaszczyk,
  \emph{Convergence rates for greedy algorithms in reduced basis methods}, SIAM
  J. MATH. ANAL (2011), 1457--1472.

\bibitem{BuffaMadayPateraPrudhommeTurinici2011}
A.~Buffa, Y.~Maday, A.~T. Patera, C.~Prud'homme, and G.~Turinici, \emph{A
  priori convergence of the greedy algorithm for the parametrized reduced
  basis}, ESAIM-Math. Model. Numer. Anal. (2011), 595--603, Special Issue in
  honor of David Gottlieb.

\bibitem{Buhmann03}
M.~D. Buhmann, \emph{Radial basis functions: theory and implementations},
  Cambridge Monographs on Applied and Computational Mathematics, vol.~12,
  Cambridge University Press, Cambridge, 2003. \MR{MR1997878 (2004f:41001)}

\bibitem{cavoretto2014}
R.~Cavoretto, G.E. Fasshauer, and M.~McCourt, \emph{An introduction to the
  hilbert-schmidt svd using iterated brownian bridge kernels}, Numerical
  Algorithms (2014), 1--30 (English).

\bibitem{ChenGottlieb}
Y.~Chen and S.~Gottlieb, \emph{Reduced collocation methods: Reduced basis
  methods in the collocation framework.}, J. Sci. Comput. \textbf{55} (2013),
  no.~3, 718--737.

\bibitem{ChenGottliebMaday}
Y.~Chen, S.~Gottlieb, and Y.~Maday, \emph{Parametric analytical preconditioning
  and its applications to the reduced collocation methods}, Comptes Rendus
  Mathematique \textbf{352} (2014), no.~7 - 8, 661 -- 666.

\bibitem{CHMR-M2an}
Y.~Chen, J.~S. Hesthaven, Y.~Maday, and J.~Rodr\'{i}guez, \emph{Improved
  successive constraint method based a posteriori error estimate for reduced
  basis approximation of 2d maxwell's problem}, M2AN \textbf{43} (2009),
  1099--1116.

\bibitem{CHMR_Sisc}
\bysame, \emph{Certified reduced basis methods and output bounds for the
  harmonic {M}axwell's equations}, Siam J. Sci. Comput. \textbf{32} (2010),
  no.~2, 970--996.

\bibitem{DriscolFornberg00r}
T.~Driscoll and B.~Fornberg, \emph{Interpolation in the limit of increasingly
  flat radial basis functions}, Computers and Mathematics with Applications
  \textbf{43} (2002), 413--422.

\bibitem{FasshauerMcCourt2012}
G.~E. Fasshauer and M.~J. McCourt, \emph{Stable evaluation of {G}aussian radial
  basis function interpolants}, SIAM J. Sci. Comput. \textbf{34} (2012), no.~2,
  A737--A762. \MR{2914302}

\bibitem{Fasshauer07}
G.E. Fasshauer, \emph{Meshfree approximation methods with {MATLAB}},
  Interdisciplinary Mathematical Sciences, vol.~6, World Scientific Publishing
  Co. Pte. Ltd., Hackensack, NJ, 2007, With 1 CD-ROM (Windows, Macintosh and
  UNIX). \MR{MR2357267}

\bibitem{Fink_Rheinboldt_1}
J.~P. Fink and W.~C. Rheinboldt, \emph{On the error behavior of the reduced
  basis technique for nonlinear finite element approximations}, Z. Angew. Math.
  Mech. \textbf{63} (1983), no.~1, 21--28. \MR{MR701832 (85e:73047)}

\bibitem{FornbergLarssonFlyer2009}
B.~Fornberg, E.~Larsson, and N.~Flyer, \emph{Stable computations with
  {G}aussian radial basis functions}, SIAM J. Sci. Comput. \textbf{33} (2011),
  no.~2, 869--892. \MR{2801193}

\bibitem{fornberglehto2011}
B.~Fornberg and E.~Lehto, \emph{Stabilization of {RBF}-generated finite
  difference methods for convective {PDE}s}, J. Comput. Phys. \textbf{230}
  (2011), no.~6, 2270--2285. \MR{2764546}

\bibitem{fornberg2013}
B.~Fornberg, E.~Lehto, and C.~Powell, \emph{Stable calculation of
  {G}aussian-based {RBF}-{FD} stencils}, Comput. Math. Appl. \textbf{65}
  (2013), no.~4, 627--637. \MR{3011446}

\bibitem{FornbergPiretSisc2007}
B.~Fornberg and C.~Piret, \emph{A stable algorithm for flat radial basis
  functions on a sphere}, SIAM J. Sci. Comput. \textbf{30} (2007/08), no.~1,
  60--80. \MR{MR2377431 (2008m:65042)}

\bibitem{FornbergWright2004}
B.~Fornberg and G.~Wright, \emph{Stable computation of multiquadric
  interpolants for all values of the shape parameter}, Comput. Math. Appl.
  \textbf{48} (2004), no.~5-6, 853--867. \MR{MR2105258 (2005j:65009)}

\bibitem{gonnet2011}
P.~Gonnet, R.~Pach{\'o}n, and L.~N. Trefethen, \emph{Robust rational
  interpolation and least-squares}, Electron. Trans. Numer. Anal. \textbf{38}
  (2011), 146--167. \MR{2871863}

\bibitem{Grepl_Maday_Nguyen_Patera}
M.~A. Grepl, Y.~Maday, N.~C. Nguyen, and A.~T. Patera, \emph{Efficient
  reduced-basis treatment of nonaffine and nonlinear partial differential
  equations}, Mathematical Modelling and Numerical Analysis \textbf{41} (2007),
  no.~3, 575--605.

\bibitem{Grepl_Patera}
M.~A. Grepl and A.~T. Patera, \emph{A posteriori error bounds for reduced-basis
  approximations of parametrized parabolic partial differential equations},
  M2AN Math. Model. Numer. Anal. \textbf{39} (2005), no.~1, 157--181.
  \MR{MR2136204 (2006b:65130)}

\bibitem{Haasdonk_Ohlberger}
B.~Haasdonk and M.~Ohlberger, \emph{Reduced basis method for finite volume
  approximations of parametrized linear evolution equations}, M2AN Math. Model.
  Numer. Anal. \textbf{42} (2008), no.~2, 277--302. \MR{2405149 (2009f:76106)}

\bibitem{HesthavenGottlieb2007}
J.~S. Hesthaven, S.~Gottlieb, and D.~Gottlieb, \emph{Spectral methods for
  time-dependent problems}, Cambridge Monographs on Applied and Computational
  Mathematics, vol.~21, Cambridge University Press, Cambridge, 2007.
  \MR{2333926 (2008i:65223)}

\bibitem{HKCHP}
D.B.P. Huynh, D.J. Knezevic, Y.~Chen, J.S. Hesthaven, and A.T. Patera, \emph{A
  natural-norm successive constraint method for inf-sup lower bounds}, CMAME
  \textbf{199} (2010), 1963--1975.

\bibitem{HuynhSCM}
D.B.P. Huynh, G.~Rozza, S.~Sen, and A.T. Patera, \emph{A successive constraint
  linear optimization method for lower bounds of parametric coercivity and
  inf-sup stability constants}, C. R. Acad. Sci. Paris, S$\acute{e}$rie I.
  \textbf{345} (2007), 473 -- 478.

\bibitem{Kansa90}
E.~J. Kansa, \emph{Multiquadrics - a scattered data approximation scheme with
  applications to computational fluid dynamics {I}: Surface approximations and
  partial derivative estimates}, Computers and Mathematics with Applications
  \textbf{19} (1990), no.~8/9, 127--145.

\bibitem{Kansa90b}
\bysame, \emph{Multiquadrics - a scattered data approximation scheme with
  applications to computational fluid dynamics {II}: Solutions to parabolic,
  hyperbolic, and elliptic partial differential equations}, Computers and
  Mathematics with Applications \textbf{19} (1990), no.~8/9, 147--161.

\bibitem{KansaHon02}
E.J. Kansa and Y.C. Hon (eds.), \emph{Radial basis functions and partial
  differential equations}, Elsevier Science B.V., Amsterdam, 2002, Comput.
  Math. Appl. {{\bf{4}}3} (2002), no. 3-5. \MR{MR1883566}

\bibitem{LarssonFornberg03}
E.~Larsson and B.~Fornberg, \emph{A numerical study of some radial basis
  function based solution methods for elliptic {PDE}s}, Comput. Math. Appl.
  \textbf{46} (2003), no.~5-6, 891--902. \MR{MR2020447 (2004k:65235)}

\bibitem{larsson2013}
E.~Larsson, E.~Lehto, A.~Heryudono, and B.~Fornberg, \emph{Stable computation
  of differentiation matrices and scattered node stencils based on {G}aussian
  radial basis functions}, SIAM J. Sci. Comput. \textbf{35} (2013), no.~4,
  A2096--A2119. \MR{3092752}

\bibitem{Maday_Patera_Turinici_2}
Y.~Maday, A.~T. Patera, and G.~Turinici, \emph{A priori convergence theory for
  reduced-basis approximations of single-parameter elliptic partial
  differential equations}, J. Sci. Comput. \textbf{17} (2002), 437--446.

\bibitem{marchi_near-optimal_2005}
S.~D. Marchi, R.~Schaback, and H.~Wendland, \emph{Near-optimal data-independent
  point locations for radial basis function interpolation}, Advances in
  Computational Mathematics \textbf{23} (2005), no.~3, 317--330.

\bibitem{MatacheBabuskaSchwab2000}
A.~M. Matache, I.~Babu{\v{s}}ka, and C.~Schwab, \emph{Generalized {$p$}-{FEM}
  in homogenization}, Numer. Math. \textbf{86} (2000), no.~2, 319--375.
  \MR{1777492 (2001f:65139)}

\bibitem{Micchelli86}
C.~Micchelli, \emph{Interpolation of scattered data: Distance matrices and
  conditionally positive definite functions}, Constructive Approximation
  \textbf{2} (1986), 11--22.

\bibitem{Nagy}
D.A. Nagy, \emph{Modal representation of geometrically nonlinear behaviour by
  the finite element method.}, Computers and Structures \textbf{10} (1979),
  683--688.

\bibitem{Nguyen_Rozza_Huynh_Patera}
N.C. Nguyen, G.~Rozza, D.B.P. Huynh, and AT~Patera, \emph{Reduced basis
  approximation and a posteriori error estimation for parametrized parabolic
  pdes; application to real-time bayesian parameter estimation}, {Large-Scale
  Inverse Problems and Quantification of Uncertainty} (L.~Biegler, G.~Biros,
  O.~Ghattas, M.~Heinkenschloss, D.~Keyes, B~Mallick, Y.~Marzouk, L.~Tenorio,
  B.~van Bloemen~Waanders, and K.~Willcox, eds.), John Wiley \& Sons, Ltd, UK,
  2010.

\bibitem{Noor_Peters}
A.~K. Noor and J.~M. Peters, \emph{Reduced basis technique for nonlinear
  analysis of structures}, AIAA Journal \textbf{18} (1980), no.~4, 455--462.

\bibitem{Peterson}
J.~S. Peterson, \emph{The reduced basis method for incompressible viscous flow
  calculations}, SIAM Journal on Scientific and Statistical Computing
  \textbf{10} (1989), no.~4, 777--786.

\bibitem{PlatteTrefethenKuijlaars}
R.~B. Platte, L.~N. Trefethen, and A.~B.~J. Kuijlaars, \emph{Impossibility of
  fast stable approximation of analytic functions from equispaced samples},
  SIAM Rev. \textbf{53} (2011), no.~2, 308--318. \MR{2806639 (2012e:41018)}

\bibitem{PlatteIMA}
R.B. Platte, \emph{How fast do radial basis function interpolants of analytic
  functions converge?}, IMA J. Numer. Anal. \textbf{31} (2011), no.~4,
  1578--1597.

\bibitem{Platte2005}
R.B. Platte and T.A. Driscoll, \emph{Polynomials and potential theory for
  {G}aussian radial basis function interpolation}, SIAM J. Numer. Anal.
  \textbf{43} (2005), no.~2, 750--766 (electronic). \MR{MR2177889}

\bibitem{Porsching}
T.~A. Porsching, \emph{Estimation of the error in the reduced basis method
  solution of nonlinear equations}, Math. Comp. \textbf{45} (1985), no.~172,
  487--496. \MR{MR804937 (86m:65053)}

\bibitem{Prudhomme_Rovas_Veroy_Maday_Patera_Turinici}
C.~Prud'homme, D.~Rovas, K.~Veroy, Y.~Maday, A.~T. Patera, and G.~Turinici,
  \emph{Reliable real-time solution of parametrized partial differential
  equations: Reduced-basis output bound methods}, Journal of Fluids Engineering
  \textbf{124} (2002), no.~1, 70--80.

\bibitem{Rozza_Huynh_Patera}
G.~Rozza, D.B.P. Huynh, and A.T. Patera, \emph{Reduced basis approximation and
  a posteriori error estimation for affinely parametrized elliptic coercive
  partial differential equations: Application to transport and continuum
  mechanics}, Arch Comput Methods Eng \textbf{15} (2008), no.~3, 229--275.

\bibitem{ShenTangBook}
J.~Shen and T.~Tang, \emph{Spectral and high-order methods with applications},
  Mathematics Monograph Series, vol.~3, Science Press Beijing, Beijing, 2006.
  \MR{2723481}

\bibitem{shudingyeo2005}
C.~Shu, H.~Ding, and K.S. Yeo, \emph{Computation of incompressible
  {N}avier-{S}tokes equations by local {RBF}-based differential quadrature
  method}, CMES Comput. Model. Eng. Sci. \textbf{7} (2005), no.~2, 195--205.
  \MR{MR2124482}

\bibitem{tolstykh2003}
A.I. Tolstykh and D.A. Shirobokov, \emph{On using radial basis functions in a
  ``finite difference mode'' with applications to elasticity problems}, Comput.
  Mech. \textbf{33} (2003), no.~1, 68--79. \MR{MR2031902}

\bibitem{TrefethenSpecBook}
L.~N. Trefethen, \emph{Spectral methods in {MATLAB}}, Software, Environments,
  and Tools, vol.~10, Society for Industrial and Applied Mathematics (SIAM),
  Philadelphia, PA, 2000. \MR{1776072 (2001c:65001)}

\bibitem{UrbanPatera2014}
K.~Urban and A.~T. Patera, \emph{An improved error bound for reduced basis
  approximation of linear parabolic problems}, Math. Comp. \textbf{83} (2014),
  no.~288, 1599--1615. \MR{3194123}

\bibitem{Veroy_Prudhomme_Patera}
K.~Veroy, C.~Prud'homme, and A.~T. Patera, \emph{Reduced-basis approximation of
  the viscous burgers equation: Rigorous a posteriori error bounds}, C. R.
  Acad. Sci. Paris, S\'erie I \textbf{337} (2003), no.~9, 619--624.

\bibitem{Wendland1995}
H.~Wendland, \emph{Piecewise polynomial, positive definite and compactly
  supported radial functions of minimal degree}, Adv. Comput. Math. \textbf{4}
  (1995), no.~4, 389--396. \MR{1366510 (96h:41025)}

\bibitem{wendland}
\bysame, \emph{Scattered data approximation}, Cambridge Monographs on Applied
  and Computational Mathematics, vol.~17, Cambridge University Press,
  Cambridge, 2005. \MR{MR2131724 (2006i:41002)}

\bibitem{Wright2006}
G.B. Wright and B.~Fornberg, \emph{Scattered node compact finite
  difference-type formulas generated from radial basis functions}, J. Comput.
  Phys. \textbf{212} (2006), no.~1, 99--123. \MR{MR2183606 (2006j:65320)}

\bibitem{YanoPatera2013}
M.~Yano and Anthony~T. Patera, \emph{A space-time variational approach to
  hydrodynamic stability theory}, Proc. R. Soc. Lond. Ser. A Math. Phys. Eng.
  Sci. \textbf{469} (2013), no.~2155, 20130036, 23. \MR{3061348}

\bibitem{YanoPateraUrban2014}
M.~Yano, Anthony~T. Patera, and K.~Urban, \emph{A space-time
  {$hp$}-interpolation-based certified reduced basis method for {B}urgers'
  equation}, Math. Models Methods Appl. Sci. \textbf{24} (2014), no.~9,
  1903--1935. \MR{3208882}

\end{thebibliography}

\providecommand{\bysame}{\leavevmode\hbox to3em{\hrulefill}\thinspace}
\providecommand{\MR}{\relax\ifhmode\unskip\space\fi MR }
\providecommand{\MRhref}[2]{%
  \href{http://www.ams.org/mathscinet-getitem?mr=#1}{#2}
}
\providecommand{\href}[2]{#2}

\end{document}